\newcommand{\Size}[1]{\left\lvert #1 \right\rvert}
\newcommand{\Span}[1]{\left\langle#1\right\rangle}
\newcommand{\Set}[1]{\left\lbrace #1 \right\rbrace}
\newcommand{\puncture}[2]{\mathop{\vee}\!\left[#1;#2 \right]}
\let\phi\varphi
\newcommand{\bij}{f}
\DeclareMathOperator{\rk}{rk}
\theoremstyle{plain}
\newtheorem{theorem}{Theorem}[section]
\newtheorem{lemma}[theorem]{Lemma}
\newtheorem{proposition}[theorem]{Proposition}
\theoremstyle{remark}
\newtheorem{remark}{Remark}
\theoremstyle{definition}
\newtheorem{definition}[theorem]{Definition}
\newtheorem{example}[theorem]{Example}
\newtheorem*{notation*}{Notation}
\lstdefinelanguage{GAP}{%
 morekeywords={%
 Assert,Info,IsBound,QUIT,%
 TryNextMethod,Unbind,and,break,%
 continue,do,elif,%
 else,end,false,fi,for,%
 function,if,in,local,%
 mod,not,od,or,%
 quit,rec,repeat,return,%
 then,true,until,while%
 },%
 sensitive,%
 morecomment=[l]\#,%
 morestring=[b]",%
 morestring=[b]',%
}[keywords,comments,strings]
\DeclareMathOperator{\N}{\mathbb{N}}
\DeclareMathOperator{\Z}{\mathbb{Z}}
\DeclareMathOperator{\Mon}{Mon}
\DeclareMathOperator{\DP}{\mathcal{O}}
\DeclareMathOperator{\Inf}{Inf}
\DeclareMathOperator{\Part}{\mathcal{P}}
\DeclareMathOperator{\Lie}{\mathfrak{L}}
\DeclareMathOperator{\gie}{\mathfrak{g}}
\DeclareMathOperator{\Sym}{Sym}
\DeclareMathOperator{\wt}{wt}
\DeclareMathOperator{\fil}{fil}
\DeclareMathOperator{\supp}{supp}
\newenvironment{nouppercase}{%
  \renewcommand{\uppercasenonmath}[1]{}}{}
\newcommand{\mybox}{%
    \collectbox{%
        \setlength{\fboxsep}{2pt}%
        \fbox{\BOXCONTENT}%
    }%
}
\begin{document}
\title[]{\Large{A modular idealizer chain and unrefinability of partitions with repeated parts}} 
 \author[R.~Aragona]{Riccardo Aragona}
\author[R.~Civino]{Roberto Civino}
\author[N.~Gavioli]{Norberto Gavioli}

\address{DISIM \\
 Universit\`a degli Studi dell'Aquila\\
 via Vetoio\\
 I-67100 Coppito (AQ)\\
 Italy}       

\email[R.~Aragona]{riccardo.aragona@univaq.it}
\email[R.~Civino]{roberto.civino@univaq.it} 
\email[N.~Gavioli]{norberto.gavioli@univaq.it}

\date{} \thanks{All the authors are members of INdAM-GNSAGA
 (Italy). R. Civino is funded by the Centre of excellence
 ExEMERGE at University of L'Aquila, with which also the other authors collaborate.}

\subjclass[2010]{17B70; 17B60; 20D20; 05A17} \keywords{Integer partitions; Normalizer chain; Lie rings; Rigid commutators; Sylow \(p\)-subgroups.}

\begin{abstract}
Recently Aragona et al.\ have introduced a chain of normalizers in a Sylow $2$-subgroup of $\Sym(2^n)$, starting from an elementary abelian regular subgroup. They have shown that the indices of consecutive groups in the chain depend on the number of partitions into distinct parts and have given a description, by means of rigid commutators,
of the first $n-2$ terms in the chain. Moreover, they proved that the $(n-1)$-th term of the chain is described by means of rigid commutators corresponding to unrefinable partitions into distinct parts.
Although the mentioned chain can be defined in a Sylow $p$-subgroup of $\Sym(p^n)$, for $p > 2$ computing the chain of normalizers
becomes a challenging task, in the absence of a suitable notion of rigid commutators. 
This problem is addressed here from an alternative point of view. We propose a more general framework for the normalizer chain, defining a chain of idealizers in a Lie ring over $\mathbb Z_m$ whose elements are represented by integer partitions.  
We show how the corresponding idealizers are generated by subsets of partitions into at most $m-1$ parts and we conjecture that the idealizer chain grows as the normalizer chain in the symmetric group. As an evidence of this, we establish a correspondence between the two constructions in the case $m=2$.
\end{abstract}
\begin{nouppercase}
\maketitle
\end{nouppercase}


\section{Introduction}
Let $n\ge 3$ be an integer and $\Sigma \le \Sym(2^n)$ be a Sylow $2$-subgroup containing an elementary abelian regular subgroup $T$.
Let us define $N_0 = N_\Sigma(T)$ and recursively let $N_i$ be  the normalizer in $\Sigma$ of the previous term, i.e.\ 
\begin{equation}\label{eq:chain}
N_i= N_\Sigma(N_{i-1}).
\end{equation}
Aragona et al.~\cite{Aragona2021} have recently shown that, for $1 \leq i \leq n-2$, a transversal of $N_{i-1}$ in $N_{i}$ can be put
in one-to-one correspondence with a set of partitions into distinct parts in such a way that, denoting by $\{q_{2,i}\}_{i\geq 1}$ the partial sum  of the sequence  $\{p_{2,i}\}_{i\geq 1}$ of partitions into distinct parts, the following equality is satisfied:   
\begin{equation}\label{eq:firstclaim}
\log_{2}\Size{N_{i}  : N_{i-1}} = q_{2,i+2}.
\end{equation}
The first numbers of the mentioned sequences and the relative OEIS references are displayed in Table~\ref{tab:zero}.
\begin{table}[hb]
		\centering
	{\renewcommand{\arraystretch}{1.3}
		\begin{tabular}{c||c|c|c|c|c|c|c|c|c|c|c|c|c|c|c|c||c}
			$i$ & 1 & $2$ & $3$ & $4$ & $5$ & $6$ & $7$ & $8$ & $9$ & 10& $11$ &$12$&13&14&15&16&OEIS\\
			\hline\hline
			${p_{2,i}}$ & 0 & 0& 1& 1& 2& 3& 4& 5& 7& 9& 11& 14& 17& 21& 26& 31 & \href{https://oeis.org/A111133}{A111133}\\ 
			${q_{2,i}}$ & 0& 0& 1& 2& 4&7& 11& 16& 23& 32& 43& 57& 74& 95& 121& 152 & \href{https://oeis.org/A317910}{A317910}\\
			\hline		
			\end{tabular}
		\bigskip }
	\caption{First values of the sequences $\{p_{2,i}\}$ and $\{q_{2,i}\}$}
	\label{tab:zero}
\end{table}

In a subsequent work~\cite{Aragona2022}, the authors introduced the concept of unrefinable partitions and proved that
a transversal of $N_{n-2}$ in $N_{n-1}$ is in one-to-one correspondence with a set of unrefinable partitions whose minimal excludant satisfies an additional requirement.
The study of the \emph{chain on normalizers} $(N_i)_{i \geq 0}$  has been carried out  up to the $(n-1)$-th term by means of \emph{rigid commutators}~\cite{Aragona2021}, a set of generators of $\Sigma$, which is closed under commutation and which was intentionally designed for the purpose.
However, the technique of rigid commutators could not be easily generalized to the  \emph{odd} case of the normalizer chain, i.e.\ the one defined in a Sylow $p$-subgroup of $\Sym(p^n)$, with $p$ odd. Understanding the behavior of the chain in the odd case was indeed left as an open problem by the authors.

\subsection{Overview of the new contributions}
In an attempt to achieve results in this direction, we introduce the graded Lie ring  associated to the lower central series of \(\Sigma\), which
is  the iterated wreath product of Lie rings of rank one, and reflects the construction of the Sylow \(p\)-subgroup of \(\Sym(p^n)\) (cf.\ also Sushchansky and Netreba~\cite{MR2148825}), for any prime $p \geq 2$.

More generally, given any integer $m\geq 2$, we endow the set of partitions, where each part can be repeated no more than $m-1$ times, with the  \emph{Lie ring}  structure  mentioned above. We call it the \emph{Lie ring of partitions} (cf.\ Sec.~\ref{sec:two}). In this ring we recursively define the analog of the chain of normalizers, i.e.\ the \emph{idealizer chain}, starting from an abelian subring that plays the role of the elementary abelian regular subgroup $T$.
Notice that, when $m=2$,  no part can be repeated, i.e.\ that we have the same combinatorial setting as in Aragona et al.~\cite{Aragona2021}.
Not surprisingly, we could notice that the behavior of the first $n-2$ terms of 
the chain of idealizers  is in complete accordance with that of the chain of normalizers,
 i.e.\  Eq.~\eqref{eq:firstclaim} has an analogous version for the terms of the idealizer chain, summarized in Theorem~\ref{cor:distinct_parts}.
 Interestingly, this result  can be made even more general in the setting of the Lie ring of partitions. Indeed the mentioned theorem holds in the case when $m$ is any integer greater than two, provided that partitions with at most $m-1$ repeated parts are  considered in place of partitions into distinct parts. 
 In Theorem~\ref{cor:main} we prove that  the growth of the  idealizer chain is related to the partial sums of the sequence of the number of partitions with at most $m-1$ repeated parts. This result involves the first \(n-1\) terms of the idealizer chain, one more than the case \(m=2\). 
  We conjecture that Theorem~\ref{cor:main} is the $p$-analog of the chain of normalizers in $\Sym(p^n)$, where $m=p$ is odd.

Sec.~\ref{sec:m2} is totally devoted to the case $m=2$, where we show that the terms of the normalizer chain can be actually computed via the Lie ring structure described in this paper (see Theorem~\ref{thm:liechan}). Precisely, we define a bijection (cf.\ Definition~\ref{def:bije}) from the basis elements of the Lie ring of partitions to the set of rigid commutators which preserves commutators.

In Sec.~\ref{sec:unref} we address the problem of first idealizer not following the rules of Theorems~\ref{cor:distinct_parts} and \ref{cor:main}, i.e.\ the \((n-\delta_{m,2})\)-th. If \(m=2\), it has been proved by Aragona et al.~\cite{Aragona2022} that 
$\log_{2}\Size{N_{n-1}  : N_{n-2}}$ depends on the number of a suitable subset of unrefinable partitions satisfying some additional constraints. 
We  introduce here a natural generalization of the concept of
unrefinability for  partitions with at most \(m-1\) repeated parts. We prove, in the Lie ring context, that the \((n-\delta_{m,2})\)-th idealizer  is determined by unrefinable partitions with at most \(m-1\) repeated parts satisfying the same additional constraints as in Aragona et al.~\cite{Aragona2022}  (see Theorem~\ref{prop:unrefinable}). 
 We conclude the section by giving a characterization of $n$-th idealizer (cf.\ Theorem~\ref{thm:nth}), which, by virtue of Theorem~\ref{thm:liechan}, also allows to give a precise characterization of the $n$-th normalizer \(N_n\), improving already known results~\cite{Aragona2021, Aragona2022}.

Sec.~\ref{sec:conclusion} concludes the paper with some comments on open problems.
\subsection{Related works in the combinatorics on integer partitions}
The original notion of unrefinability for partitions into distinct parts  is at least as old as the OEIS entry \href{https://oeis.org/A179009}{A179009}~\cite{OEIS} (due to David S.\ Newman in 2011) and has been formally introduced by Aragona et al.~\cite{Aragona2022}.  In that paper, unrefinable partitions satisfying a special condition on the minimum excludant appear in a natural way in connection to the chain of normalizers~\cite{Aragona2021}. The notion of minimum excludant has  been studied in the context of integer partitions by other authors~\cite{andrews2019,Ballantine2020,Hopkins2022,du2023conjecture}, although  it also appears in combinatorial game theory~\cite{gurvich2012further,fraenkel2015harnessing}.
Partial combinatorial equalities regarding unrefinable partitions have been recently shown {in~\cite{aragona2021maximal,aragona2022number},}
and the study of the algorithmic complexity of generating all the unrefinable partitions of a given integer has been addressed~\cite{aragona2021verification}.

\section{A polynomial representation of partitions of integers}\label{sec:two}
Let 
\(\Lambda = \{\lambda_i\}_{i=1}^\infty\) be a sequence of non-negative
integers  with finite support, i.e.\              such             that
\[\wt(\Lambda)=  \sum_{i=1}^\infty i\lambda_i < \infty.\] The sequence 
 \(\Lambda\) defines a  \emph{partition}  of \(N = \wt(\Lambda)\). Each non-zero $i$ is a \emph{part} of the partition, the integer \(\lambda_i\) is  the
multiplicity  of  the part  \(i\)  in  \(\Lambda\) and the support of $\Lambda$ is denoted by ${\supp(\Lambda)=\{i \mid \lambda_i \ne 0\}}$. The  maximal  part
of  \(\Lambda\)  is  the  maximum  \(i\)  such  that
\(\lambda_i\ne 0\), i.e.\ $\max \supp(\Lambda)$. The set of the partitions whose maximal part is at most
\(j\)    is     denoted    by     \(\Part(j)\)    and     we    define      for each  \(m>0\)
\[
\Part_{m}(j)= \Set{\Lambda\in\Part(j) \mid  \lambda_i\leq m-1 \text{
    for  all }  i}
    \]
  as  the set of partitions with maximal part at most $j$ and where each part has multiplicity at most $m-1$.
  We set also \[\Part_m= \bigcup_{j\ge 1}\Part_{m}(\,j).\]
	
\subsection{Power monomials}
In  the  polynomial  ring  \(\Z[x_k]_{k=1}^\infty\)  we  consider  the
monomials \(x_k^{i} \) where \(i\) is a non-negative integer.	
The  \emph{power  monomial}  \(x^\Lambda\),  where  \(\Lambda\)  is  a
partition, is defined as
\[
  x^\Lambda= \prod_{i} x_i^{\lambda_i} .
\]
These monomials clearly form a basis for \(\Z[x_k]_{k=1}^\infty\) as a
free  \(\Z\)-module.  The  set of  power monomials in at most $n$ variables  is denoted by
\[\Mon_n =   \Set{ \smash[b]{x^\Lambda \mid \Lambda  \in \Part(n)}}.\]
The  degree  of  the  power  monomials  \(x^\Lambda\)  is  defined  as
\(\deg(x^\Lambda)=\sum_{i\ge 1} \lambda_i\).
	 
Note that
\[x^\Lambda              x^\Theta             =              \prod_{i}
  x_i^{\lambda_i+\theta_i}=x^{\Lambda+\Theta}.\]  In   particular  the
\(\Z\)-module  \(\Z[x_1,\dots,x_n]\), with  basis
\(\Mon_{n}\), has  a natural  structure of  \(\Z\)-algebra and  is the
ring of polynomials in \(n\) variables with coefficients in \(\Z\).
	
The \(k\)-partial derivative is defined by
\[\partial_k(x^\Lambda) =  \begin{cases}
    0 & \text{if \(\lambda_k=0,\)}\\
    \lambda_k x^{D_k(\Lambda)} & \text{otherwise.}
  \end{cases}\]                                                  where
\(D_k(\Lambda)=\{\lambda_i-\delta_{ik}\}_{i=1}^\infty\).   In particular
\(\partial_k\)  can be  extended  by linearity  to  a derivation  over
\(\Z[x_1,\dots,x_n]\).

	Let \(m\) be a positive integer and  consider the ideal \(I=(x_1^m,\dots,x_n^m)\) of \(\Z[x_1,\dots,x_n]\). Clearly  \(\partial_{k}(I) \subseteq m\Z[x_1,\dots,x_n]\) and so  the \(k\)-th partial derivative can be seen also as a derivation defined on the \emph{ring  of power
		monomials     modulo     \(m\)     in    \(n\)     variables} (see also Strade~\cite{Strade2017})
\[
\DP_{m}(n)= \Z_{m}[x_1,\dots,x_n]/(x_1^m,\dots,x_n^m).
\]

Starting from a  modular Lie ring \(\gie\) over \(\Z\),  let us define
\(\gie^{\uparrow}=  \DP_{m}(1)\otimes_{\Z} \gie \).
We also define the \emph{inflated Lie algebra}  as
\[\Inf(\gie)= \left<\partial \otimes 1 \right>\ltimes
  \gie^{\uparrow},\]
   where \(\partial\)
  is the standard derivative.

\subsection{Lie   rings of partitions}\label{sec:part_lie_ring}

The \emph{Lie ring  \(\Lie(n)\) over \(\Z_m\) of partitions with maximal
  part at most \(n-1\)} is obtained starting from the trivial Lie ring
\(\Lie(1)= \Z_m\)          and         defining          iteratively
\(\Lie(i)= \Inf(\Lie(i-1))\).  For the sake of shortness, we shall
write \(\Lie\) in place of \(\Lie(n)\).
	
In  order   to  have  a   description  which  is  more   suitable  for
computations, \(\Lie\)  can be  seen as the  free \(\Z_m\)-module
with basis \(\mathcal{B}= \bigcup_{i=1}^n \mathcal{B}_i \), where
\[\mathcal{B}_i= \Set{ x^\Lambda \partial_i \mid
x^\Lambda\in \mathcal{O}_m(n)    \text{ with } \Lambda\in \Part_m(i-1) }. \]
	
The Lie bracket is defined on this basis by

\begin{eqnarray}\label{eq:commutator_def}
  \left[x^\Lambda \partial_k , x^\Theta \partial_j\right] =&
  \partial_{j}(x^\Lambda)  x^\Theta \partial_k - x^\Lambda \partial_{k}(x^\Theta) \partial_j= \nonumber \\
  =&
  \begin{cases}
    \partial_{j}(x^\Lambda)  x^\Theta \partial_k & \text{if \(j<k\)}, \\
    - x^\Lambda \partial_{k}(x^\Theta) \partial_j & \text{if \(j>k\)},\\
    0 & \text{otherwise,}
  \end{cases}
\end{eqnarray}
and is extended  to \(\Lie\) by bilinearity. If  \(\Lie_{i}\) is the
\(\Z_m\)-linear span  of \(\mathcal{B}_i\)  then \(\Lie_{i}\)  is an
abelian           subring           of          \(\Lie\)           and
\([\Lie_{i},\Lie_{j}]\subseteq  \Lie_{\max(i,j)}\)   and,  as  a
\(\Z_m\)-module,
\(\Lie(n)=\bigoplus_{i=1}^n\Lie_{i}=\Lie(n-1)\oplus    \Lie_{n}\). Moreover \(\Lie_{n}\) is an ideal and so \(\Lie(n)= \Lie(n-1)\ltimes    \Lie_{n}\), as a Lie ring.
\bigskip
	
For a subset \(\mathcal{H} \) of \(\Lie\) we set \(\Z_m\mathcal{H}=\Set{ax^\lambda\partial_k \mid a\in \Z_m\text{ and }x^\lambda\partial_k \in \mathcal{H}}\).
Let
\(\phi_{\Theta,j}\colon  \mathcal{B}  \to  \Z_m \mathcal{B} \)  be  the
right adjoint map defined by
\[
  \phi_{\Theta,j}\left(x^\Lambda \partial_k\right)= \left[     x^\Lambda
    \partial_k , x^\Theta\partial_j\right].
\]
	
\begin{lemma}\label{lem:injective_map}
  Let    \(    x^\Theta \partial_j     \in    \mathcal{B}\)    and
  \(\mathcal{E}=\Set{x^\Lambda   \partial_k   \in   \mathcal{B}   \mid
    \phi_{\Theta,j}\left(x^\Lambda \partial_k\right)  \ne  0   }  \).   Then the restriction
   of  \(\phi_{\Theta,j}\) to $\mathcal E$  is
  injective.
\end{lemma}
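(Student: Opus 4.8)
The plan is a short case analysis built on the explicit form of the bracket recorded in Eq.~\eqref{eq:commutator_def}. First observe that if $x^\Lambda\partial_k\in\mathcal E$, then necessarily $k\neq j$, and that $\phi_{\Theta,j}\bigl(x^\Lambda\partial_k\bigr)$ is a nonzero $\Z_m$-multiple of a single basis element, namely
\[
\phi_{\Theta,j}\bigl(x^\Lambda\partial_k\bigr)=
\begin{cases}
\lambda_j\,x^{D_j(\Lambda)+\Theta}\,\partial_k, & \text{if }k>j,\\[3pt]
-\,\theta_k\,x^{\Lambda+D_k(\Theta)}\,\partial_j, & \text{if }k<j.
\end{cases}
\]
Here the multiplicity $\lambda_j$ (resp.\ $\theta_k$) lies in $\{1,\dots,m-1\}$ because $\Lambda\in\Part_m(k-1)$ (resp.\ $\Theta\in\Part_m(j-1)$), hence is nonzero in $\Z_m$; and membership in $\mathcal E$ is precisely what guarantees that the displayed power monomial does not collapse to $0$ in $\DP_m(n)$, so that the right-hand side is a genuine nonzero element of $\Z_m\mathcal B$ (its maximal part being at most $\max(k,j)-1$, since $\supp(\Lambda),\supp(\Theta)\subseteq\{1,\dots,\max(k,j)-1\}$). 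Note in both cases the derivative index occurring in $\phi_{\Theta,j}\bigl(x^\Lambda\partial_k\bigr)$ equals $\max(k,j)$.

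Next, let $x^\Lambda\partial_k,\ x^{\Lambda'}\partial_{k'}\in\mathcal E$ with $\phi_{\Theta,j}\bigl(x^\Lambda\partial_k\bigr)=\phi_{\Theta,j}\bigl(x^{\Lambda'}\partial_{k'}\bigr)\neq0$. Since $\Lie$ is free over $\Z_m$ on $\mathcal B$, two equal nonzero $\Z_m$-multiples of basis elements must be scalar multiples of the \emph{same} basis element; comparing derivative indices therefore yields $\max(k,j)=\max(k',j)$. If this common value equals $j$, then $k<j$ and $k'<j$ (both being $\neq j$). Otherwise it exceeds $j$; then $k>j$, so $\max(k,j)=k$, and since $k'\neq j$ the equality $\max(k',j)=k$ forces $k'>j$ and $k'=k$.

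Finally, compare the monomial parts. When $k=k'>j$, the equality $\lambda_j\,x^{D_j(\Lambda)+\Theta}\,\partial_k=\lambda'_j\,x^{D_j(\Lambda')+\Theta}\,\partial_k$ (both sides nonzero) forces $x^{D_j(\Lambda)+\Theta}=x^{D_j(\Lambda')+\Theta}$ in $\DP_m(n)$, hence $D_j(\Lambda)=D_j(\Lambda')$; as $D_j$ merely lowers the $j$-th multiplicity by one it is injective, so $\Lambda=\Lambda'$. When $k<j$ and $k'<j$, the analogous comparison gives $\Lambda+D_k(\Theta)=\Lambda'+D_{k'}(\Theta)$, i.e.\ $\lambda_i-\delta_{ik}=\lambda'_i-\delta_{ik'}$ for every $i$; evaluating at $i=k$ and using that $x^\Lambda\partial_k\in\mathcal B$ forces $\Lambda\in\Part_m(k-1)$, hence $\lambda_k=0$, we obtain $-1=\lambda'_k-\delta_{kk'}$, impossible (as $\lambda'_k\ge0$) unless $k=k'$; and then $\lambda_i=\lambda'_i$ for all $i$. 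In every case $x^\Lambda\partial_k=x^{\Lambda'}\partial_{k'}$, which is the asserted injectivity. The only genuinely delicate point — the nearest thing to an obstacle — is the bookkeeping forced by the truncation ideal $(x_1^m,\dots,x_n^m)$: one must invoke membership in $\mathcal E$ exactly to rule out the degenerate case where one of the relevant power monomials becomes $0$, while the elementary remark that $\lambda_k=0$ whenever $x^\Lambda\partial_k\in\mathcal B$ is what settles the subtler second case.
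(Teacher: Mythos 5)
Your argument is correct and follows essentially the same route as the paper's: write out the bracket explicitly via Eq.~\eqref{eq:commutator_def}, split into the two cases according to whether the surviving derivation index is $k$ or $j$ (equivalently, the paper's dichotomy $j<\min$ versus $j>\max$), and then compare exponents, using in the delicate subcase that $\lambda_k=0$ for $x^\Lambda\partial_k\in\mathcal B$ — exactly the paper's observation about the exponent of $x_l$ being decreased on one side only. Your handling of the truncation ideal and of why the image is a nonzero multiple of a single basis element is slightly more explicit than the paper's, but the substance is the same.
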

\begin{proof}
  Assume that  \(x^\Lambda \partial_k, x^\Xi  \partial_l \in \mathcal{E}\) are
 such that
  \begin{equation}\label{lem_inj_eq1}
  \phi_{\Theta,j}\left(x^\Lambda \partial_k\right)=\phi_{\Theta,j}\left(x^\Xi  \partial_l\right).
  \end{equation}  
 Since   both $\phi_{\Theta,j}\left(x^\Lambda \partial_k\right)$ and $\phi_{\Theta,j}\left(x^\Xi  \partial_l\right)$ are non-trivial,
 we either have $j > \max(k,l)$ or $j<\min(k,l)$. 
 In the first case, assuming without loss of generality that $k\leq l < j$, from Eq.~\eqref{lem_inj_eq1} we obtain 
 $
 \partial_k(x^\Theta)x^\Lambda\partial_j =  \partial_l(x^\Theta)x^\Xi\partial_j, 
 $
 i.e.\ 
 \begin{equation}\label{lem_inj_eq2}
 x^\Lambda \partial_k(x^\Theta)= x^\Xi \partial_l(x^\Theta). 
 \end{equation}
 If we assume by contradiction that $k < l$, since $\lambda_l=\xi_l=0$, we have  the exponent 
 of $x_l$ is left unchanged by the derivative $\partial_k$ in the left term of Eq.~\eqref{lem_inj_eq2}
 while it is decreased by one in the right term of Eq.~\eqref{lem_inj_eq2}, a contradiction. Hence we have $k=l$,
 from which we obtain $ x^\Lambda \partial_k(x^\Theta)= x^\Xi \partial_k(x^\Theta)$, and therefore $x^\Lambda=x^\Xi$,
 the claim.
 
In the second case, $j < \min( k,l)$ means 
\begin{equation}\label{lem_inj_eq3}
\partial_j(x^\Lambda)x^\Theta\partial_k= \partial_j(x^\Xi)x^\Theta\partial_l, 
\end{equation} 
from which immediately $k=l$. 
Then Eq.~\eqref{lem_inj_eq3} implies $\partial_j(x^\Lambda) = \partial_j(x^\Xi)$, therefore 
$x^\Lambda = x^\Xi$.
\end{proof}

\begin{definition}
A Lie subring  \(\mathfrak{H}\) of \(\Lie\) is said  to be \emph{homogeneous}
if  it  is the  \(\Z_m\)-linear  span  of  a subset  $\mathcal{H}$  of
\(\mathcal{B}\). 
\end{definition}
\begin{example}\label{defT}
The   \(\Z_m\)-submodule   \(\mathfrak{T}\)  of   \(\Lie\)          spanned       by
\(\mathcal{T}= \Set{\partial_1,\dots, \partial_{n}}\) is a homogeneous (abelian) Lie subring.  Notice that \(\partial_{i}\) is the generator of the center of \(\Lie(i)\). When \(m\) is prime, this shows that  \(\mathfrak{T}\) is the natural counterpart for the elementary abelian regular subgroup of the Sylow \(p\)-subgroup of \(\Sym(p^n)\).  
\end{example}
\begin{definition}
If \(\mathcal{H}\) is a subset  of \(\mathcal{B}\), then its \emph{idealizer}
is defined as
\[  N_\mathcal{B}(\mathcal{H})=  \Set{b\in  \mathcal{B} \mid  [b,h] \in
    \Z_m\mathcal{H} \text{ for all } h\in \mathcal{H}  }.
\]
\end{definition}

The following theorem shows that the idealizers of homogeneous subrings \(\mathfrak{H}\) can be efficiently computed directly from the intersection \(\mathfrak{H}\cap \mathcal{B}\). 
\begin{theorem}\label{thm_homogeneous:normalizers}
Let
  \(\mathfrak{H}\) be  a homogeneous subring of  \(\Lie\) having basis
  \(\mathcal{H} \subseteq   \mathcal{B}\).  The
  idealizer of \(\mathfrak{H}\) in  \(\Lie\) is
  the     homogeneous    subring     of     \(\Lie\)    spanned by
  \(N_\mathcal{B}(\mathcal{H})\) as a free \(Z_m\)-module.
\end{theorem}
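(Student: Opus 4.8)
The plan is to prove that the idealizer $\mathfrak N=\{x\in\Lie:[x,\mathfrak H]\subseteq\mathfrak H\}$ of $\mathfrak H$ coincides with the $\Z_m$-linear span $\mathfrak M$ of $N_\mathcal B(\mathcal H)$. Note that $\mathfrak M$ is homogeneous by construction, being spanned by a subset of $\mathcal B$, and that once the set equality $\mathfrak N=\mathfrak M$ is known, $\mathfrak M$ is automatically a Lie subring, since an idealizer always is (by the Jacobi identity); so it remains to prove the two inclusions.

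The inclusion $\mathfrak M\subseteq\mathfrak N$ is immediate. If $b\in N_\mathcal B(\mathcal H)$ then $[b,h]\in\Z_m\mathcal H\subseteq\mathfrak H$ for every $h\in\mathcal H$, hence by $\Z_m$-bilinearity of the bracket and the fact that $\mathcal H$ spans $\mathfrak H$ we get $[b,\mathfrak H]\subseteq\mathfrak H$, i.e.\ $b\in\mathfrak N$; and $\mathfrak N$, being a $\Z_m$-submodule, then contains all of $\mathfrak M$.

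For $\mathfrak N\subseteq\mathfrak M$, let $x\in\mathfrak N$ and expand $x=\sum_{b\in\mathcal S}c_b\,b$ over a finite set $\mathcal S\subseteq\mathcal B$ with every $c_b\ne0$; it is enough to show $\mathcal S\subseteq N_\mathcal B(\mathcal H)$. Fix $h=x^\Theta\partial_j\in\mathcal H$. The structural input I would use is that, by the commutator formula~\eqref{eq:commutator_def}, for each $b\in\mathcal B$ the bracket $[b,h]=\phi_{\Theta,j}(b)$ is either $0$ or of the shape $\alpha_b\,b'(b)$ with $b'(b)\in\mathcal B$ a \emph{single} basis element and $\alpha_b=\pm\mu$ for a part-multiplicity $\mu\in\{1,\dots,m-1\}$, so that $\alpha_b\ne0$ in $\Z_m$ whenever $[b,h]\ne0$. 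Combined with Lemma~\ref{lem:injective_map}, which states that $b\mapsto b'(b)$ is injective on $\mathcal E=\{b\in\mathcal B:[b,h]\ne0\}$, this yields a no-cancellation statement: in
\[
[x,h]=\sum_{b\in\mathcal S\cap\mathcal E}c_b\,\alpha_b\,b'(b)
\]
the basis elements $b'(b)$ are pairwise distinct, so the coefficient of $b'(b)$ in the $\mathcal B$-expansion of $[x,h]$ is exactly $c_b\,\alpha_b$. Since $x\in\mathfrak N$ gives $[x,h]\in\mathfrak H=\bigoplus_{e\in\mathcal H}\Z_m e$, any $b\in\mathcal S\cap\mathcal E$ with $b'(b)\notin\mathcal H$ would force $c_b\,\alpha_b=0$ in $\Z_m$; as $c_b\ne0$ and $\alpha_b$ is a non-zero multiplicity (a unit when $m$ is prime), this is impossible, so $b'(b)\in\mathcal H$ and therefore $[b,h]=\alpha_b\,b'(b)\in\Z_m\mathcal H$. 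For $b\in\mathcal S\setminus\mathcal E$ one has $[b,h]=0\in\Z_m\mathcal H$ trivially. As $h\in\mathcal H$ was arbitrary, every $b\in\mathcal S$ belongs to $N_\mathcal B(\mathcal H)$, hence $x\in\mathfrak M$.

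I expect the no-cancellation step to be the main obstacle: one must be certain that distinct basis vectors in $\mathcal S$ cannot interfere in the expansion of $[x,h]$ — so that the membership $[x,h]\in\mathfrak H$ can be tested coordinate by coordinate and transferred onto the individual brackets $[b,h]$ — and that the scalars $\alpha_b$ produced by the bracket are non-zero in $\Z_m$, so that $c_b\,\alpha_b=0$ really does force $b'(b)\in\mathcal H$. The first point is exactly what Lemma~\ref{lem:injective_map} supplies; the second rests on the defining bound $\mu\le m-1$ for multiplicities in $\Part_m$. The remaining ingredients — the grading $\Lie=\bigoplus_i\Lie_i$, the bilinearity of the bracket, and the trivial reductions — are routine bookkeeping.
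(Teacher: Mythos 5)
Your argument is essentially identical to the paper's own proof: expand the idealizer element over the basis $\mathcal B$, bracket with a fixed $x^\Theta\partial_j\in\mathcal H$, and invoke Lemma~\ref{lem:injective_map} to rule out cancellation among the resulting basis vectors, so that membership of the bracket in $\mathfrak H$ can be tested coordinate by coordinate. The one caveat you rightly flag --- that concluding $b\in N_{\mathcal B}(\mathcal H)$ from $c_b\alpha_b\ne 0$ requires $\Z_m$ to have no zero divisors (or $\alpha_b$ to be a unit), which may fail for composite $m$ --- is equally present, and left unaddressed, in the paper's proof, which stops at the weaker conclusion $l_{\Lambda,k}\phi_{\Theta,j}(x^\Lambda\partial_k)\in\Z_m\mathcal H$.
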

\begin{proof}
Let \(\mathfrak{N}=N_{\Lie}(\mathfrak{H})\) be the
  idealizer of \(\mathfrak{H}\) in  \(\Lie\) and 
  let
  \[z=\sum_{x^\Lambda\partial_k          \in         \mathcal{B}}
    l_{\Lambda,k}x^\Lambda\partial_k  \in   \mathfrak{N}.\]  
    We
  need                  to                  show                  that
  \(l_{\Lambda,k}x^\Lambda\partial_k \in \mathfrak{N}\) for all
  \(\Lambda\)  and \(k\).   Since  \(\mathfrak{H}\)  is a  homogeneous
  subring then for all  \( x^\Theta\partial_j\in \mathcal{H}\) it
  suffices            to            show            that            if
  \(\left[l_{\Lambda,k} x^\Lambda \partial_k,
    x^\Theta\partial_j\right]\ne             0\)             then
  \(\left[l_{\Lambda,k} x^\Lambda \partial_k,
    x^\Theta\partial_j\right]           =           l_{\Lambda,k}
  \phi_{\Theta,j}\left(x^\Lambda \partial_k\right) \in \Z_m\mathcal{H}\).  Indeed,
  if \( x^\Theta\partial_j\in \mathcal{H}\), then
  \[\mathfrak{H} \ni [z, x^\Theta\partial_j] =
    \sum_{x^\Lambda\partial_k   \in  \mathcal{B}}   l_{\Lambda,k}
    \phi_{\Theta,j}\left(x^\Lambda \partial_k\right) .\]
		
  Since \(\phi_{\Theta,j}\left(x^\Lambda \partial_k\right)\in \Z_m\mathcal{B}\),
  since the  set \(\mathcal{B}\)  is a  basis  for \(\Lie\)  and since the  subset
  \(\mathcal{H}\subseteq\mathcal{B}\) is a basis for \(\mathfrak{H}\),
  by          Lemma~\ref{lem:injective_map}           we          have
  \(l_{\Lambda,k}\phi_{\Theta,j}\left(x^\Lambda \partial_k\right)   \in   \Z_m
  \mathcal{H} \) as required.
\end{proof}
\subsection{The idealizer chain}
Let us now define the bases for the \emph{chain of idealizers}, starting from the subring $\mathcal{T}$ defined in Example~\ref{defT}. 
\begin{definition}
For \(-1\le i \le n-1-\delta_{m,2}\), set
\begin{eqnarray}
  \mathcal{U}&=&\mathcal{T} \cup\Set{x_j \partial_k \mid 1\le j < k\le n}, \nonumber\\
  \mathcal{N}_i&=&
                   \begin{cases}
                     \mathcal{T} & \text{if \(i=-1\)}\\
                     \mathcal{U} & \text{if \(i=0\)}\\
                     \mathcal{N}_{i-1}\mathrel{\dot\cup} \mathcal{W}_i
                     & \text{otherwise}
                   \end{cases} \label{eq:ni}
\end{eqnarray}
where
\begin{equation}\label{eqwi}
\mathcal{W}_i = \Set{x^\Lambda \partial_k \in \mathcal{B} \mid n-i+1
    \le k \le n \text{ and } \wt(\Lambda)=k+i-n+1 + \delta_{m,2}}.
\end{equation}
\end{definition}
\begin{remark}
The need for the symbol $\delta_{m,2}$, as it will be clearer later, depends on the fact that the case \(m=2\) is different from the other cases since there is no partition of \(2\) into at least two distinct parts. 
\end{remark}

\begin{remark}\label{rem:N(n-2)}
  Note that from \eqref{eq:ni} it follows that
  \begin{align*}
    \mathcal{N}_{n-1-\delta_{m,2}} &= \Set{
                        x^\Lambda\partial_k \in \mathcal{B} \mid \wt(\Lambda) \le k 
                        },\\
    \mathcal{N}_{n-2-\delta_{m,2}} &= \Set{
                        x^\Lambda\partial_k \in \mathcal{B} \mid \wt(\Lambda) \le k-1 
                        },\\
    \intertext{and in general for \(3+\delta_{m,2}\le i\le n\)}
    \mathcal{N}_{n-i} &= \Set{
                        x^\Lambda\partial_k \in \mathcal{B} \mid \wt(\Lambda) \le k-i+1 +\delta_{m,2}
                        }\cup \mathcal{U}.
  \end{align*}
\end{remark}
\begin{definition}
The \emph{idealizer chain} starting from the \(\Z_m\)-submodule \(\mathfrak{T}\)  of   \(\Lie\) (cf.\ Example~\ref{defT})  is  defined as follows:
\begin{equation}\label{def_chain}
\mathfrak{N}_i = 
\begin{cases}
N_{\Lie}(\mathfrak{T}) & i=0,\\
N_{\Lie}(\mathfrak{N}_{i-1}) & i \geq 1.
\end{cases}
\end{equation}
\end{definition}
We will prove that for $0 \leq i \leq n-1$ the 
  Lie  subring  \(\mathfrak{N}_i\)  is  the  \(\Z_m\)-linear  span  of  \(\mathcal{N}_i\). To do so, we need the next results.

\begin{lemma}                                           \label{lem:N0}
  \(\mathcal{N}_0=N_{\mathcal{B}}(\mathcal{T})\).
\end{lemma}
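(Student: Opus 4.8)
The plan is to compute directly. For an arbitrary basis element $x^\Lambda\partial_k\in\mathcal{B}$ I would evaluate the brackets $[x^\Lambda\partial_k,\partial_j]$ for $j=1,\dots,n$ and read off from \eqref{eq:commutator_def} exactly when all of them lie in $\Z_m\mathcal{T}$. Specialising \eqref{eq:commutator_def} to $x^\Theta\partial_j=\partial_j$ (that is, $\Theta=0$, $x^\Theta=1$) and using $\partial_k(1)=0$ gives
\[
[x^\Lambda\partial_k,\partial_j]=
\begin{cases}
\partial_j(x^\Lambda)\,\partial_k & \text{if }j<k,\\
0 & \text{if }j\ge k,
\end{cases}
\]
while $\partial_j(x^\Lambda)=\lambda_j x^{D_j(\Lambda)}$ when $\lambda_j\ge 1$ and $0$ otherwise; since $x^\Lambda\partial_k\in\mathcal{B}$ forces $\Lambda\in\Part_m(k-1)$, one notes that $x^{D_j(\Lambda)}\partial_k$ is again an element of $\mathcal{B}$.

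Next I would observe that $\Z_m\mathcal{T}$ is exactly the set of scalar multiples $a\partial_i$ of elements of $\mathcal{T}$. Because $1\le\lambda_j\le m-1$ implies $\lambda_j\not\equiv 0\pmod m$, the element $\lambda_j x^{D_j(\Lambda)}\partial_k$ is nonzero in the free $\Z_m$-module $\Lie$, and therefore $[x^\Lambda\partial_k,\partial_j]\in\Z_m\mathcal{T}$ if and only if $j\ge k$, or $\lambda_j=0$, or $D_j(\Lambda)=0$. Consequently $x^\Lambda\partial_k\in N_{\mathcal{B}}(\mathcal{T})$ precisely when $D_j(\Lambda)=0$ for every $j<k$ with $\lambda_j\ge 1$.

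Finally I would distinguish two cases on $\Lambda$. If $\Lambda=0$ the condition is vacuous, so $\partial_k\in N_{\mathcal{B}}(\mathcal{T})$ for all $k$. If $\Lambda\ne 0$, set $j_0=\max\supp(\Lambda)$; since $\Lambda\in\Part_m(k-1)$ we have $j_0\le k-1<k$ and $\lambda_{j_0}\ge 1$, so the condition forces $D_{j_0}(\Lambda)=0$, which means that $\Lambda$ is the partition consisting of the single part $j_0$ with multiplicity one, i.e.\ $x^\Lambda\partial_k=x_{j_0}\partial_k$ with $j_0<k$. This gives $N_{\mathcal{B}}(\mathcal{T})\subseteq\mathcal{T}\cup\Set{x_j\partial_k\mid 1\le j<k\le n}=\mathcal{U}=\mathcal{N}_0$, and the reverse inclusion follows from the immediate checks $[\partial_k,\partial_j]=0$ and $[x_j\partial_k,\partial_i]\in\Set{0,\partial_k}\subseteq\Z_m\mathcal{T}$, again from \eqref{eq:commutator_def}.

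The argument is essentially mechanical, and the only point requiring care is the nonvanishing of $\lambda_j x^{D_j(\Lambda)}\partial_k$: a multiplicity strictly between $0$ and $m$ times a basis vector is still nonzero, and it lies in $\Z_m\mathcal{T}$ only when that basis vector is already one of the $\partial_i$. This is where freeness of $\Lie$ over $\Z_m$ enters, and it is exactly what prevents, for composite $m$, a basis element $x^\Lambda\partial_k$ with a part larger than a single $x_j$ from slipping into the idealizer. Everything else is bookkeeping with $\supp$, $\wt$, and the case distinction in \eqref{eq:commutator_def}.
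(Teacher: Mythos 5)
Your proof is correct and follows essentially the same route as the paper: compute $[x^\Lambda\partial_k,\partial_j]=\partial_j(x^\Lambda)\partial_k$ directly from \eqref{eq:commutator_def} and observe that membership in $\Z_m\mathcal{T}$ forces $\Lambda=0$ or $x^\Lambda=x_j$ for a single $j<k$. Your extra remark that $1\le\lambda_j\le m-1$ guarantees $\lambda_j x^{D_j(\Lambda)}\partial_k\ne 0$ in the free $\Z_m$-module is a worthwhile detail the paper leaves implicit, but it does not change the argument.
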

\begin{proof}
We clearly have \(\mathcal{T}  \subseteq N_{\mathcal{B}}(\mathcal{T})\). Now, if \(x_{i}  \partial_j  \in \mathcal{U}\)
with $i < j$, then 
\[
\Z_m\mathcal{T} \ni [x_i\partial_j,\partial_k] = 
\begin{cases}
\partial_j & k=i,\\
0 & k \ne i,
\end{cases}
\] 
therefore \(\mathcal{U}  \subseteq  N_{\mathcal{B}}(\mathcal{T})\).

Conversely, let  \(x^\Lambda   \partial_j\in    N_{\mathcal{B}}(\mathcal{T})\). For $1 \leq k \leq n$
we have    \([x^\Lambda            \partial_j,\partial_k]=\partial_k(x^\Lambda)
  \partial_{j}\in  \Z_m\mathcal{T}\). This is possible when \(\Lambda=0\) or if
  \(x^\Lambda=x_k\) for some $1 \leq k \leq n$, i.e.\ $x^\Lambda \partial_j \in \mathcal{N}_0$.
\end{proof}

The following result, which will be useful later on,   is straightforward.
\begin{lemma}                \label{lem:weight_comm}                If
  \([x^\Lambda\partial_j, x^\Theta\partial_k]= c x^\Gamma\partial_u\),
  where    \(0\ne    c\in    \Z_m\),    then    \(u=\max(j,k)\)    and
  \(\wt(\Gamma)=\wt(\Lambda)+\wt(\Theta)-\min(j,k)\).
\end{lemma}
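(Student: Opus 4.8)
The plan is to prove this simply by unwinding the definition of the Lie bracket in Eq.~\eqref{eq:commutator_def}, splitting into the two possible non-trivial cases according to whether $j<k$ or $k<j$, and in each case reading off $u$ and $\Gamma$ directly before computing $\wt(\Gamma)$. The only facts needed are the explicit formula for $\partial_k$ on a power monomial and the elementary observation that lowering the multiplicity of the part $k$ in a partition by one decreases its weight by exactly $k$.

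First I would observe that Eq.~\eqref{eq:commutator_def} gives $[x^\Lambda\partial_j,x^\Theta\partial_k]=0$ when $j=k$, so the hypothesis $c\neq 0$ forces $j\neq k$. Suppose first that $j<k$. Re-indexing Eq.~\eqref{eq:commutator_def} one then has $[x^\Lambda\partial_j,x^\Theta\partial_k]=-x^\Lambda\,\partial_j(x^\Theta)\,\partial_k$, so the factor $\partial_k$ is forced and $u=k=\max(j,k)$. Non-vanishing requires $\theta_j\neq 0$, and in that case $\partial_j(x^\Theta)=\theta_j x^{D_j(\Theta)}$, so that $x^\Gamma=x^{\Lambda+D_j(\Theta)}$, this being a genuine non-zero basis monomial of $\DP_m(n)$ precisely because $c\neq 0$; note also $c\equiv -\theta_j \pmod m$. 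Since $D_j(\Theta)$ is obtained from $\Theta$ by lowering the multiplicity of the part $j$ by one, $\wt(D_j(\Theta))=\wt(\Theta)-j$, whence $\wt(\Gamma)=\wt(\Lambda)+\wt(\Theta)-j=\wt(\Lambda)+\wt(\Theta)-\min(j,k)$, as claimed.

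The case $k<j$ is handled identically: Eq.~\eqref{eq:commutator_def} now gives $[x^\Lambda\partial_j,x^\Theta\partial_k]=\partial_k(x^\Lambda)\,x^\Theta\,\partial_j=\lambda_k x^{D_k(\Lambda)+\Theta}\partial_j$, so $u=j=\max(j,k)$, $x^\Gamma=x^{D_k(\Lambda)+\Theta}$, and $\wt(\Gamma)=\bigl(\wt(\Lambda)-k\bigr)+\wt(\Theta)=\wt(\Lambda)+\wt(\Theta)-\min(j,k)$. I do not anticipate any real obstacle: the statement is a direct computation from the bracket formula, and the only mild subtlety — that a product of power monomials can collapse to $0$ in $\DP_m(n)$ as soon as some exponent reaches $m$ — is automatically excluded by the standing assumption $c\neq 0$, so both displayed identities hold verbatim.
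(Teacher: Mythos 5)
Your proof is correct: the paper itself gives no argument for this lemma, dismissing it as ``straightforward,'' and your direct unwinding of Eq.~\eqref{eq:commutator_def} (with the correct re-indexing of the two cases $j<k$ and $k<j$, and the observation that $c\neq 0$ rules out both $j=k$ and any collapse of the product in $\DP_m(n)$) is exactly the computation being taken for granted. Nothing to add.
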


\begin{lemma}
If \(1\le i \le n-1-\delta_{m,2}\), then  \([\mathcal{U},\mathcal{W}_i]   \subseteq  \Z_m\mathcal{N}_{i-1}\).
\end{lemma}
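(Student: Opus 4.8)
The plan is a direct verification: it suffices to show $[u,w]\in\Z_m\mathcal{N}_{i-1}$ for every $u\in\mathcal{U}$ and every $w\in\mathcal{W}_i$. The ingredients are the bracket formula~\eqref{eq:commutator_def} --- which makes the bracket of two elements of $\mathcal{B}$ a $\Z_m$-multiple of a single element of $\mathcal{B}$, or zero --- the weight identity of Lemma~\ref{lem:weight_comm}, and the description of the chain recorded in Remark~\ref{rem:N(n-2)}. From that remark I extract the only fact about $\mathcal{N}_{i-1}$ I need:
\[
  \Set{x^\Gamma\partial_v\in\mathcal{B}\mid \wt(\Gamma)\le v+j-n+1+\delta_{m,2}}\subseteq\mathcal{N}_j ,\qquad 0\le j\le n-1-\delta_{m,2},
\]
which is the general identity of the remark for $0\le j\le n-3-\delta_{m,2}$ and its first two identities for $j\in\{n-2-\delta_{m,2},\,n-1-\delta_{m,2}\}$; I apply it with $j=i-1$, admissible since $1\le i\le n-1-\delta_{m,2}$.

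For the computation, write a generic $u\in\mathcal{U}$ as $u=x^\Phi\partial_q$ with $\wt(\Phi)\le q-1$: if $u\in\mathcal{T}$ then $\Phi=0$, and if $u=x_a\partial_q$ with $a<q$ then $\Phi$ is the one-part partition with part $a$, so $\wt(\Phi)=a\le q-1$. Fix $w=x^\Theta\partial_k\in\mathcal{W}_i$, so $\wt(\Theta)=k+i-n+1+\delta_{m,2}$ by~\eqref{eqwi}. If $[u,w]=0$ there is nothing to prove; otherwise $[u,w]=c\,x^\Gamma\partial_v$ with $0\ne c\in\Z_m$ and $x^\Gamma\partial_v\in\mathcal{B}$, and Lemma~\ref{lem:weight_comm} gives $v=\max(q,k)$ and $\wt(\Gamma)=\wt(\Phi)+\wt(\Theta)-\min(q,k)$. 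Using $\max(q,k)+\min(q,k)=q+k$ and substituting $\wt(\Theta)$,
\[
  v-\wt(\Gamma)=q+k-\wt(\Phi)-\wt(\Theta)=\bigl(q-1-\wt(\Phi)\bigr)+\bigl(n-i-\delta_{m,2}\bigr)\ge n-i-\delta_{m,2},
\]
so $\wt(\Gamma)\le v+(i-1)-n+1+\delta_{m,2}$. By the displayed inclusion (with $j=i-1$) we get $x^\Gamma\partial_v\in\mathcal{N}_{i-1}$, hence $[u,w]\in\Z_m\mathcal{N}_{i-1}$, which is the assertion.

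I expect no conceptual obstacle; the difficulty is purely bookkeeping, and two points want care. First, the partition weight $\wt$ must be kept strictly apart from the polynomial degree: it is essential that $\wt(\Phi)=a$, not $1$, for $u=x_a\partial_q$, because that is exactly what makes $\wt(\Phi)\le q-1$ the sharp inequality used above (and, in turn, makes the chain progress by one weight unit at each step). Second, one should check that Remark~\ref{rem:N(n-2)} does supply the inclusion at the ends of the range --- $i=1$, where $\mathcal{N}_{i-1}=\mathcal{N}_0=\mathcal{U}$, and $i=n-1-\delta_{m,2}$, where $\mathcal{N}_{i-1}=\mathcal{N}_{n-2-\delta_{m,2}}$ --- by matching the bound $\wt(\Gamma)\le v-1$ against the formulas displayed there; elsewhere the general identity applies directly. (That $x^\Gamma\partial_v\in\mathcal{B}$ is automatic from~\eqref{eq:commutator_def}, since $\supp(\Gamma)\subseteq\supp(\Phi)\cup\supp(\Theta)$ gives $\max\supp(\Gamma)\le\max(q,k)-1=v-1$.)
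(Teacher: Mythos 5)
Your proof is correct and follows essentially the same route as the paper's: bracket a generator of \(\mathcal{U}\) against one of \(\mathcal{W}_i\), bound the weight of the result via Lemma~\ref{lem:weight_comm}, and conclude membership in \(\mathcal{N}_{i-1}\) from the weight description of Remark~\ref{rem:N(n-2)}. The only difference is organizational: you absorb the paper's case split (\(k\le j\) versus \(k>j\), plus the ``lands in \(\mathcal{U}\)'' escape) into the single uniform inequality \(v-\wt(\Gamma)=(q-1-\wt(\Phi))+(n-i-\delta_{m,2})\ge n-i-\delta_{m,2}\), which is a clean tightening of the same argument.
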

\begin{proof}
	 
  Let        \(x^\Lambda\partial_j\in       \mathcal{W}_i\)        and
  \(x_h^{e_h}\partial_k\in \mathcal{U}\), where \(0\le e_h\le 1\) and let 
  $ c
  x^\Gamma\partial_u = [x^\Lambda\partial_j,            x_h^{e_h}\partial_k]$,    where     \(0\ne    c\in     \Z_m\).     If
  \(x^\Gamma\partial_u  \in  \mathcal{U}\)  there is  nothing  to 
  prove, so assume    \(x^\Gamma\partial_u    \notin
  \mathcal{U}\). If \(k\le j\),  then       either       \(c=0\)      or, since $\wt(\Gamma)<\wt(\Lambda)$,
  \(  x^\Gamma\partial_u \in  \mathcal{N}_{i-1}\).  Otherwise, if
  \( k>j \), then 
  $ c x^\Gamma\partial_u = \partial_j (x_h^{e_h})x^\Lambda\partial_k \ne 0$ if and only if 
  $h=j$ and $e_h=1$. Moreover, since we are assuming \(x^\Gamma\partial_u    \notin
  \mathcal{U}\), then it satisfies Eq.~\eqref{eqwi}, and we have 
  \[
 \wt(\Lambda) = j+1-(n-1)+\delta_{m,2}.
 \]
  Now, $c x^\Gamma\partial_u = x^\Lambda\partial_k$ and 
  \begin{eqnarray*}
  \wt(\Gamma) =   \wt(\Lambda)  &=&j+i-(n-1)+\delta_{m,2}\\
  &\leq& k+i-1-(n-1)+\delta_{m,2},
  \end{eqnarray*}
 therefore $x^\Gamma\partial_u \in  \mathcal{N}_{i-1}$.
\end{proof}

\begin{lemma}
  If      \(1\le       i<h      \le       n-1-\delta_{m,2}\)      then
  \([\mathcal{W}_i, \mathcal{W}_h] \subseteq \Z_{m}\mathcal{N}_{h-1}\).
\end{lemma}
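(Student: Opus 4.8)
The plan is to argue exactly as in the preceding lemma (the $[\mathcal U,\mathcal W_i]$ case), now taking a general basis element from $\mathcal W_h$ in place of one from $\mathcal U$, and to track weights via Lemma~\ref{lem:weight_comm}. So let $x^\Lambda\partial_j\in\mathcal W_i$ and $x^\Theta\partial_k\in\mathcal W_h$, and write $c\,x^\Gamma\partial_u=[x^\Lambda\partial_j,x^\Theta\partial_k]$ with $0\ne c\in\Z_m$; if $c=0$ there is nothing to prove. By Lemma~\ref{lem:weight_comm} we have $u=\max(j,k)$ and $\wt(\Gamma)=\wt(\Lambda)+\wt(\Theta)-\min(j,k)$. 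The defining constraints of $\mathcal W_i$ and $\mathcal W_h$ give $n-i+1\le j\le n$, $\wt(\Lambda)=j+i-n+1+\delta_{m,2}$, and $n-h+1\le k\le n$, $\wt(\Theta)=k+h-n+1+\delta_{m,2}$. If $x^\Gamma\partial_u\in\mathcal U$ we are done, so assume $x^\Gamma\partial_u\notin\mathcal U$.

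First I would dispose of the case $k\le j$, i.e.\ $\min(j,k)=k$ and $u=j$. Then
\[
\wt(\Gamma)=\wt(\Lambda)+\wt(\Theta)-k=(j+i-n+1+\delta_{m,2})+(k+h-n+1+\delta_{m,2})-k,
\]
which one checks is $\le u-(h-1)+1+\delta_{m,2}=j-h+2+\delta_{m,2}$ precisely because $i<h$ forces $i\le h-1$ and $j+2(h-n+1)+\delta_{m,2}\le j-h+2+\delta_{m,2}$ reduces to $h\le n-1+0$, which holds (and one uses $\wt(\Theta)\ge 1$ so that the derivative $\partial_j(x^\Theta)$ actually lowers the weight, i.e.\ $k\le j$ really contributes the term $-k$). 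By the displayed description of $\mathcal N_{n-(h-1)}$ in Remark~\ref{rem:N(n-2)}, together with $u=j\ge n-i+1\ge n-(h-1)+1$, this says exactly $x^\Gamma\partial_u\in\mathcal N_{h-1}$.

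The remaining case is $j<k$, so $\min(j,k)=j$ and $u=k$; here $c\,x^\Gamma\partial_u=-x^\Lambda\partial_j(x^\Theta)\partial_k$, and this is nonzero only if $\theta_j\ge 1$, in which case $\wt(\Gamma)=\wt(\Lambda)+\wt(\Theta)-j$. Substituting,
\[
\wt(\Gamma)=(j+i-n+1+\delta_{m,2})+(k+h-n+1+\delta_{m,2})-j=k+(i+h)-2(n-1)+2\delta_{m,2}.
\]
We must show this is $\le u-(h-1)+1+\delta_{m,2}=k-h+2+\delta_{m,2}$; after cancelling $k$ and one $\delta_{m,2}$ this is $(i+h)-2(n-1)+\delta_{m,2}\le -h+2$, i.e.\ $i+2h+\delta_{m,2}\le 2n$. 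Since $h\le n-1-\delta_{m,2}$ and $i\le h-1\le n-2-\delta_{m,2}$, we get $i+2h+\delta_{m,2}\le (n-2-\delta_{m,2})+2(n-1-\delta_{m,2})+\delta_{m,2}=3n-4-2\delta_{m,2}$, which is $\le 2n$ only for small $n$; so the crude bound is not enough and the \emph{main obstacle} is here — one must exploit the sharper relation coming from $x^\Gamma\partial_u\notin\mathcal U$. Namely, $x^\Gamma\partial_u\notin\mathcal U$ together with membership in some $\mathcal W$-layer forces $\wt(\Gamma)=u+t-n+1+\delta_{m,2}$ for the index $t$ of that layer; matching this against the computed $\wt(\Gamma)=k+(i+h)-2(n-1)+2\delta_{m,2}$ and recalling $u=k$ pins down $t=i+h-n+1+\delta_{m,2}$, and then the inequality $t\le h-1$ to be proved becomes $i\le n-2-\delta_{m,2}$, which is exactly the hypothesis $i<h\le n-1-\delta_{m,2}$. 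I would organize the final write-up so that this weight-matching identity is extracted first and the inequality $t\le h-1$ (equivalently $i+h\le n-2+h$, automatic from $i\le n-2-\delta_{m,2}$) is the one-line conclusion, with the boundary subtleties ($j<k$ versus $j=k$, the role of $\theta_j\ge1$, and $u\ge n-(h-1)+1$ guaranteeing we land in the right layer of the Remark) handled as in the proof of the previous lemma.
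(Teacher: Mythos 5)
Your overall strategy (compute $\wt(\Gamma)$ via Lemma~\ref{lem:weight_comm} and compare it against the weight bound of Remark~\ref{rem:N(n-2)}) is exactly the paper's, but the execution contains a genuine error: you test membership in $\mathcal{N}_{h-1}$ against the wrong bound. By Remark~\ref{rem:N(n-2)}, $\mathcal{N}_{h-1}=\mathcal{N}_{n-(n-h+1)}$ consists (besides $\mathcal{U}$) of the elements $x^\Gamma\partial_u$ with $\wt(\Gamma)\le u-(n-h+1)+1+\delta_{m,2}=u+h-n+\delta_{m,2}$; you instead use $u-(h-1)+1+\delta_{m,2}=u-h+2+\delta_{m,2}$, which is the criterion for $\mathcal{N}_{n-h+1}$, a much smaller set. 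This index inversion derails both of your cases. In the case $k\le j$ the inequality you claim to verify is simply false: for $m=3$, $n=10$, $i=7$, $h=8$ one computes $\wt(\Gamma)=j-3$ while your target is $j-6$ (and the condition your target actually requires is $i+2h+\delta_{m,2}\le 2n$, which fails here; it does not ``reduce to $h\le n-1$''). In the case $j<k$ the ``main obstacle'' you identify is a phantom created by the same slip; your workaround via the layer index $t$ happens to reconstruct the correct criterion, since $t\le h-1$ is precisely $\wt(\Gamma)\le u+h-n+\delta_{m,2}$, so that half ends up sound, if circuitously argued.

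With the correct target no case distinction is needed and the argument is one line, which is the paper's proof: since $u=\max(j,k)$ and $\min(j,k)+\max(j,k)=j+k$,
\[
\wt(\Gamma)=\wt(\Lambda)+\wt(\Theta)-\min(j,k)=u+h-(n-1)+\delta_{m,2}+\bigl(i-(n-1)+\delta_{m,2}\bigr)\le u+h-n+\delta_{m,2},
\]
the last step holding because $i\le h-1\le n-2-\delta_{m,2}$. A smaller inaccuracy: in your case $k\le j$ the nonvanishing of the bracket requires $\lambda_k\ge 1$ (the derivative $\partial_k$ acts on $x^\Lambda$ when $k<j$), not a condition on $\partial_j(x^\Theta)$, which is the other case; Lemma~\ref{lem:weight_comm} already packages this, so the parenthetical is both misplaced and unnecessary.
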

\begin{proof}
  Let       \(x^\Lambda\partial_j\in        \mathcal{W}_i\) and        \(x^\Theta\partial_k\in              \mathcal{W}_h\). Let us denote             
  \([x^\Lambda\partial_j,           x^{\Theta}\partial_k]=           c
  x^\Gamma\partial_u\)        with        \(c\ne        0\)        and let us assume that
  \(x^\Gamma\partial_u\notin             \mathcal{U}\) otherwise, as before, there is nothing to prove.
  By  \(x^\Lambda\partial_j\in        \mathcal{W}_i\) we obtain $\wt(\Lambda) = j+i-(n-1)+\delta_{m,2}$
   and by \(x^\Theta\partial_k\in        \mathcal{W}_h\) we obtain  $\wt(\Theta) = k+h-(n-1)+\delta_{m,2}$.
  Now, by Lemma~\ref{lem:weight_comm}  we have 
  \begin{eqnarray*}
   \wt(\Gamma)&=&\wt(\Lambda)+\wt(\Theta)-\min(j,k)\\
   &=&j+i-(n-1)    + \delta_{m,2} + k+h-(n-1) +  \delta_{m,2} -\min(j,k)\\
   &=& \max(j,k)+i-(n-1) +  \delta_{m,2} +h-(n-1) + \delta_{m,2}\\
   &=& u+h-(n-1) + \delta_{m,2} +i-n+1 +  \delta_{m,2} \\
   &\leq& u+(h-1)-(n-1) + \delta_{m,2},
  \end{eqnarray*}
  which implies   \(x^\Gamma\partial_u\in \mathcal{N}_{h-1}\).
\end{proof}

\begin{proposition}\label{prop:Ni}
If  \(1 \leq i \leq n-1-\delta_{m,2}\), then 
  \(\mathcal{N}_i=N_{\mathcal{B}}(\mathcal{N}_{i-1})\).
\end{proposition}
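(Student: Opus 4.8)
The plan is to argue by induction on $i$, proving $\mathcal N_i=N_{\mathcal B}(\mathcal N_{i-1})$ by two inclusions; the base of the induction is Lemma~\ref{lem:N0}, with the convention $\mathcal N_{-1}=\mathcal T$. At the step for $i\ge 1$ I may assume, combining the inductive hypothesis with Lemma~\ref{lem:N0}, Example~\ref{defT} and Theorem~\ref{thm_homogeneous:normalizers}, that $\mathfrak N_{i-1}$ is the homogeneous Lie subring of $\Lie$ with basis $\mathcal N_{i-1}$; in particular $[\mathcal N_{i-1},\mathcal N_{i-1}]\subseteq\Z_m\mathcal N_{i-1}$. I will also use the uniform description coming from Remark~\ref{rem:N(n-2)}, valid for $0\le\ell\le n-1-\delta_{m,2}$: a basis element $x^\Gamma\partial_u$ lies in $\mathcal N_\ell$ exactly when $\wt(\Gamma)\le u-n+\ell+1+\delta_{m,2}$ or $x^\Gamma\partial_u\in\mathcal U$, and $x^\Gamma\partial_u\in\mathcal U$ exactly when $\deg(x^\Gamma)\le 1$.

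\emph{First inclusion, $\mathcal N_i\subseteq N_{\mathcal B}(\mathcal N_{i-1})$.} Let $b\in\mathcal N_i=\mathcal N_{i-1}\mathrel{\dot\cup}\mathcal W_i$ and $h\in\mathcal N_{i-1}$; I must show $[b,h]\in\Z_m\mathcal N_{i-1}$. If $b\in\mathcal N_{i-1}$ this follows at once from $[\mathcal N_{i-1},\mathcal N_{i-1}]\subseteq\Z_m\mathcal N_{i-1}$. If $b\in\mathcal W_i$, use $\mathcal N_{i-1}=\mathcal U\cup\mathcal W_1\cup\dots\cup\mathcal W_{i-1}$: when $h\in\mathcal U$ then $[b,h]\in[\mathcal U,\mathcal W_i]\subseteq\Z_m\mathcal N_{i-1}$ by the lemma giving $[\mathcal U,\mathcal W_i]\subseteq\Z_m\mathcal N_{i-1}$, and when $h\in\mathcal W_\ell$ with $1\le\ell\le i-1$ then $\ell<i$, so $[b,h]\in[\mathcal W_\ell,\mathcal W_i]\subseteq\Z_m\mathcal N_{i-1}$ by the lemma giving $[\mathcal W_\ell,\mathcal W_i]\subseteq\Z_m\mathcal N_{\max(\ell,i)-1}$.

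\emph{Second inclusion, $N_{\mathcal B}(\mathcal N_{i-1})\subseteq\mathcal N_i$.} This is the substantive part, and I would prove the contrapositive: given $x^\Lambda\partial_j\in\mathcal B\setminus\mathcal N_i$, i.e.\ with $\deg(x^\Lambda)\ge 2$ and $\wt(\Lambda)\ge j-n+i+2+\delta_{m,2}$, I produce $h\in\mathcal U\subseteq\mathcal N_{i-1}$ with $[x^\Lambda\partial_j,h]\notin\Z_m\mathcal N_{i-1}$. If $j<n$, take $h=x_j\partial_{j+1}$; then $[x^\Lambda\partial_j,x_j\partial_{j+1}]=-x^\Lambda\partial_{j+1}$, a basis element of degree $\ge 2$ whose weight satisfies $\wt(\Lambda)>(j+1)-n+(i-1)+1+\delta_{m,2}$, hence not in $\mathcal N_{i-1}$. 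If $j=n$ the index cannot be raised, so I peel off one part, using Lemma~\ref{lem:weight_comm}: if $\Lambda$ has a part $p\ge 2$ with $\lambda_{p-1}\le m-2$, put $h=x_{p-1}\partial_p$ and obtain $[x^\Lambda\partial_n,x_{p-1}\partial_p]=\lambda_p\,x^{D_p(\Lambda)+(p-1)}\partial_n$ (with $\lambda_p\not\equiv 0\pmod m$ and exponent an admissible partition), of degree $\deg(x^\Lambda)\ge 2$ and weight $\wt(\Lambda)-1>n-n+(i-1)+1+\delta_{m,2}$, hence outside $\mathcal N_{i-1}$; and if no such $p$ exists, one checks that then $1\in\supp(\Lambda)$ and $\deg(x^\Lambda)\ge 3$ — the obstructing small partitions, such as $(1,1)$ or $(1,2)$, having weight too small to meet $\wt(\Lambda)\ge i+2+\delta_{m,2}$ once $i\ge 1$ — and then $h=\partial_1$ works: $[x^\Lambda\partial_n,\partial_1]=\lambda_1\,x^{D_1(\Lambda)}\partial_n$ has degree $\deg(x^\Lambda)-1\ge 2$ and weight $\wt(\Lambda)-1$. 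In every case $[x^\Lambda\partial_j,h]$ is a nonzero scalar times a single basis element not lying in $\mathcal N_{i-1}$, hence it is not in $\Z_m\mathcal N_{i-1}$, so $x^\Lambda\partial_j\notin N_{\mathcal B}(\mathcal N_{i-1})$.

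The step I expect to require the most care is the case $j=n$ of the second inclusion: the auxiliary element $h$ must simultaneously lie in $\mathcal U$ (so as to belong to $\mathcal N_{i-1}$), yield a nonzero bracket, and produce an \emph{admissible} basis monomial of $\Lie$ — no multiplicity exceeding $m-1$ and no part exceeding the top index — of degree still at least $2$; this is what forces the small case distinction on $\supp(\Lambda)$ and $\deg(x^\Lambda)$ together with the observation that the "staircase" partitions $\bigl(1^{m-1},2^{m-1},\dots\bigr)$, which would block the peeling move, are excluded by the weight lower bound when $i\ge 1$. Everything else is routine bookkeeping with Lemma~\ref{lem:weight_comm} and the description of the $\mathcal N_\ell$ in Remark~\ref{rem:N(n-2)}.
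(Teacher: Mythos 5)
Your proof is correct, and the first inclusion is handled exactly as in the paper (via the two lemmas $[\mathcal U,\mathcal W_i]\subseteq\Z_m\mathcal N_{i-1}$ and $[\mathcal W_\ell,\mathcal W_h]\subseteq\Z_m\mathcal N_{h-1}$). For the reverse inclusion you take a genuinely different, though closely related, route. The paper argues directly: given $x^\Lambda\partial_j\in N_{\mathcal B}(\mathcal N_{i-1})$ with $\deg(x^\Lambda)\ge 2$, it brackets with $x_{k-1}\partial_k$ where $k$ is the minimal part of $\Lambda$, producing an element of the same degree, same derivation index $j$, and weight $\wt(\Lambda)-1$; membership of that element in $\mathcal N_{i-1}$ then forces the weight bound defining $\mathcal N_i$. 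This single move works uniformly in $j$, but as written it silently skips the case $k=1$ (where $x_0\partial_1$ does not exist and one must use $\partial_1$ instead, at the cost of dropping the degree by one). You instead argue the contrapositive and prefer the index-raising test element $x_j\partial_{j+1}$, which keeps $\Lambda$ intact and needs no admissibility check; the price is the separate analysis at $j=n$, where you fall back on weight-peeling and must rule out the staircase obstructions $(1,1)$ and $(1,2)$ by the weight lower bound — a verification you sketch correctly. Both arguments rest on the same uniform description of $\mathcal N_\ell$ from Remark~\ref{rem:N(n-2)}; yours is somewhat longer but more explicit about the admissibility of the resulting monomials and about the corner cases, which the paper's terser proof leaves to the reader.
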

\begin{proof}
 The inclusion $\mathcal{N}_i \subseteq N_{\mathcal{B}}(\mathcal{N}_{i-1})$ follows from the previous lemmas. It
 remains to prove that $N_{\mathcal{B}}(\mathcal{N}_{i-1})\subseteq \mathcal{N}_i$. Let  
 $x^\Lambda\partial_j \in N_{\mathcal{B}}(\mathcal{N}_{i-1})$. Then for each $1 \leq l \leq n-1-\delta_{m,2}$
 and for each \(x^\Theta \partial_k \in \mathcal{W}_l\) we have 
 \([x^\Lambda  \partial_j  ,   x^\Theta  \partial_k] \in \mathcal{N}_{i-1} \setminus \mathcal{N}_{0}.\)
 Let \(k<j\)  be minimum
        such      that      \(\lambda_k\ne      0\),   
        and let $x^\Theta\partial_k=x_{k-1}\partial_k$.
        Then, since $\lambda_k \ne 0$, we have $[x^\Lambda\partial_j,x_{k-1}\partial_k] \ne 0$ and, by hypothesis,
        $[x^\Lambda\partial_j,x_{k-1}\partial_k]$ is a scalar multilple of an element 
        $x^\Gamma\partial_j \in \mathcal{N}_{i-1}$ such that 
\begin{eqnarray*}
\wt(\Gamma) = \wt(\Lambda) -1 &\leq& j+i-1-(n-1)+\delta_{m,2}\\
&<& j+i-(n-1)+\delta_{m,2}.
\end{eqnarray*}
 Therefore $\wt(\Lambda) \leq j+i-(n-1)+\delta_{m,2}$, i.e.\ $x^\Lambda\partial_j \in \mathcal{N}_i$.
\end{proof}
Based on the previous Lemma for all \(i\in \N \) we may define
\begin{equation}                            \label{eq:normalizer_comm}
  \mathcal{N}_i= N_{\mathcal{B}}(\mathcal{N}_{i-1})
\end{equation}

\begin{theorem}\label{thm:homogeneous_normalizers}
  The Lie subring \(\mathfrak{N}_i\)  is homogeneous and 
   the
  Lie  subring  \(\mathfrak{N}_i\)  is  the  \(\Z_m\)-linear  span  of
  \(\mathcal{N}_i\).  
\end{theorem}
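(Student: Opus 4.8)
The plan is to establish the statement by induction on $i\ge 0$, carrying the slightly stronger inductive hypothesis that $\mathfrak{N}_i$ is a homogeneous subring of $\Lie$ whose basis is exactly the set $\mathcal{N}_i\subseteq\mathcal{B}$. This is only formally stronger than ``$\mathfrak{N}_i$ is the $\Z_m$-linear span of $\mathcal{N}_i$'': since $\mathcal{B}$ is a free $\Z_m$-basis of $\Lie$, every subset of $\mathcal{B}$ is $\Z_m$-linearly independent and spans freely the submodule it generates, and two subsets of $\mathcal{B}$ span the same submodule only if they coincide. Hence a homogeneous subring determines its homogeneous basis uniquely, which is what makes the inductive hypothesis well posed and what allows it to be fed back into Theorem~\ref{thm_homogeneous:normalizers} at each step.

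For the base case $i=0$ I would start from Example~\ref{defT}, where $\mathfrak{T}$ is exhibited as a homogeneous subring with basis $\mathcal{T}\subseteq\mathcal{B}$. Applying Theorem~\ref{thm_homogeneous:normalizers} with $\mathfrak{H}=\mathfrak{T}$ shows that $\mathfrak{N}_0=N_{\Lie}(\mathfrak{T})$ is the homogeneous subring of $\Lie$ spanned freely by $N_{\mathcal{B}}(\mathcal{T})$, and Lemma~\ref{lem:N0} identifies $N_{\mathcal{B}}(\mathcal{T})$ with $\mathcal{N}_0$; this settles the case $i=0$. For the inductive step I would assume that $\mathfrak{N}_{i-1}$ is homogeneous with basis $\mathcal{N}_{i-1}\subseteq\mathcal{B}$ and invoke Theorem~\ref{thm_homogeneous:normalizers} once more, now with $\mathfrak{H}=\mathfrak{N}_{i-1}$ and $\mathcal{H}=\mathcal{N}_{i-1}$: it yields that $\mathfrak{N}_i=N_{\Lie}(\mathfrak{N}_{i-1})$ is the homogeneous subring of $\Lie$ spanned freely by $N_{\mathcal{B}}(\mathcal{N}_{i-1})$. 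It then only remains to rewrite this spanning set as $\mathcal{N}_i$: for $1\le i\le n-1-\delta_{m,2}$ the equality $N_{\mathcal{B}}(\mathcal{N}_{i-1})=\mathcal{N}_i$ is exactly Proposition~\ref{prop:Ni}, while for the remaining values of $i$ it is the defining relation~\eqref{eq:normalizer_comm}. In either case $\mathfrak{N}_i$ is homogeneous with basis $\mathcal{N}_i$, which closes the induction.

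I do not expect a genuine obstacle here: all of the substantive content is already packaged into Theorem~\ref{thm_homogeneous:normalizers}, Lemma~\ref{lem:N0} and Proposition~\ref{prop:Ni}, and the proof is essentially the bookkeeping that strings them together. The only two points that require a little care are keeping the inductive hypothesis in the precise form ``$\mathfrak{N}_{i-1}$ has basis $\mathcal{N}_{i-1}$'' rather than the weaker ``$\mathfrak{N}_{i-1}$ is spanned by some subset of $\mathcal{B}$'', so that the hypotheses of Theorem~\ref{thm_homogeneous:normalizers} are met at the next stage (this is exactly what the uniqueness remark in the first paragraph provides), and correctly matching the index ranges, i.e.\ that Proposition~\ref{prop:Ni} supplies $N_{\mathcal{B}}(\mathcal{N}_{i-1})=\mathcal{N}_i$ only for $1\le i\le n-1-\delta_{m,2}$, beyond which one relies on~\eqref{eq:normalizer_comm} as the definition of $\mathcal{N}_i$, consistently with Remark~\ref{rem:N(n-2)} and the later analysis of $\mathcal{N}_{n-\delta_{m,2}}$ and $\mathcal{N}_n$ in Section~\ref{sec:unref}.
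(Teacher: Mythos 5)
Your proof is correct and follows exactly the route the paper takes: the paper's own proof simply cites Theorem~\ref{thm_homogeneous:normalizers}, Lemma~\ref{lem:N0} and Proposition~\ref{prop:Ni}, and your write-up is the induction that strings these together, including the correct handling of the index range via the definition in Eq.~\eqref{eq:normalizer_comm} beyond $i=n-1-\delta_{m,2}$.
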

\begin{proof}
  The    statement    is     a    straightforward    consequence    of
  Theorem~\ref{thm_homogeneous:normalizers} and  of Lemma~\ref{lem:N0}
  and Proposition~\ref{prop:Ni}.
\end{proof}

\subsection{Connections with integer partitions}
Let $p_{m,i}$ be the number of partitions of $i$ into at least two parts, where each part can be repeated at most $m-1$ times, 
and let $q_{m,i}$ be the partial sum \[q_{m,i}=\sum_{j=1}^ip_{m,j}.\] The first values of the sequences are showed in Tab.~\ref{tab:one}. Notice that the last three OEIS entries of the table include the partition of $i$ with a single part that we do not consider.

\begin{table}[]
		\centering
	{\renewcommand{\arraystretch}{1.3}
		\begin{tabular}{c||c|c|c|c|c|c|c|c|c|c|c|c|c|c|c|c||c}
			$i$ & 1 & $2$ & $3$ & $4$ & $5$ & $6$ & $7$ & $8$ & $9$ & 10& $11$ &$12$&13&14&15&16&OEIS\\
			\hline\hline
			${p_{2,i}}$ & 0 & 0& 1& 1& 2& 3& 4& 5& 7& 9& 11& 14& 17& 21& 26& 31 & \href{https://oeis.org/A111133}{A111133}\\ 
			${q_{2,i}}$ & 0& 0& 1& 2& 4&7& 11& 16& 23& 32& 43& 57& 74& 95& 121& 152 & \href{https://oeis.org/A317910}{A317910}\\
			\hline
			${p_{3,i}}$ & 0& 1& 1& 3& 4& 6&8& 12& 15& 21& 26& 35& 43& 56& 69& 88 & \href{https://oeis.org/A000726}{A000726}\\  
			${q_{3,i}}$ & 0& 1& 2& 5& 9& 15& 23& 35& 50& 71& 97& 132& 175& 231& 300& 388\\ 
			\hline 
			${p_{4,i}}$ & 0& 1& 2& 3& 5& 8& 11& 15& 21& 28& 37& 49& 63& 81& 104& 131&  \href{https://oeis.org/A001935}{A001935}\\
			${q_{4,i}}$ & 0& 1& 3& 6& 11& 19& 30& 45& 66& 94& 131& 180& 243& 324& 428& 559&  \\ 
			\hline
			${p_{5,i}}$ & 0& 1& 2& 4& 5& 9& 12& 18& 24& 33& 43& 59& 75& 99& 126& 163&   \href{https://oeis.org/A035959}{A035959}\\
			${q_{5,i}}$ & 0& 1& 3& 7& 12& 21& 33& 51& 75& 108& 151& 210& 285& 384& 510& 673&   \\
			\hline		\end{tabular}
		\bigskip }
	\caption{First values of the sequences $(p_{m,i})$ and $(q_{m,i})$ for $2 \leq m \leq 5$}
	\label{tab:one}
\end{table}
\medskip

From Theorem~\ref{thm:homogeneous_normalizers} we derive the following  corollaries, 
here stated in the case $m=2$ and $m>2$ separately.

\begin{theorem}\label{cor:distinct_parts}
  Let      \(m=2\)      and      \(1\le      i\le      n-2\).      Then, for $n-i+1 \leq k \leq n$ we have
  \(\Size{\mathcal{W}_i\cap  \mathcal{B}_k}=  p_{2,k+2+i-n} \)  and therefore the
  free  \(\Z_2\)-module   \(\mathfrak{N}_{i}/\mathfrak{N}_{i-1}\)   has 
  rank \(q_{2,i+2}\).
\end{theorem}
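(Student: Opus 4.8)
The plan is to build an explicit bijection between $\mathcal{W}_i\cap\mathcal{B}_k$ and the set of partitions of $k+2+i-n$ into at least two distinct parts, and then to add up the cardinalities over the admissible values of $k$. Fix $1\le i\le n-2$ and $n-i+1\le k\le n$, and set $r=k+2+i-n$; from $k\ge n-i+1$ and $i\le n-2$ one gets at once $3\le r\le k$. Since $m=2$ we have $\delta_{m,2}=1$, so by \eqref{eqwi} an element $x^{\Lambda}\partial_k\in\mathcal{B}_k$ lies in $\mathcal{W}_i$ exactly when $\wt(\Lambda)=r$, while membership in $\mathcal{B}_k$ forces $\Lambda\in\Part_{2}(k-1)$, i.e.\ $\Lambda$ has pairwise distinct parts, each at most $k-1$. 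The map I would use is $x^{\Lambda}\partial_k\mapsto\Lambda$, which is injective because $\mathcal{B}$ is a $\Z_2$-basis.

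I would then verify this map is well defined onto the partitions of $r$ with at least two distinct parts. For surjectivity: if $\Lambda$ is such a partition, its largest part is at most $r-1=k+1+i-n\le k-1$, the last inequality being precisely $i\le n-2$; hence $\Lambda\in\Part_{2}(k-1)$ and $x^{\Lambda}\partial_k\in\mathcal{W}_i\cap\mathcal{B}_k$. The only partition of $r$ into distinct parts that is \emph{not} of this form is the one-part partition $\{r\}$, and it must be discarded: if $r\le k-1$, equivalently $i\le n-3$, then $x_r\partial_k\in\mathcal{U}=\mathcal{N}_0\subseteq\mathcal{N}_{i-1}$, hence $x_r\partial_k\notin\mathcal{W}_i$ since \eqref{eq:ni} presents $\mathcal{N}_i$ as the disjoint union $\mathcal{N}_{i-1}\mathrel{\dot\cup}\mathcal{W}_i$; and if $i=n-2$ then $r=k$, so $\{r\}\notin\Part_{2}(k-1)$ and $x_r\partial_k\notin\mathcal{B}_k$ in the first place. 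In both cases the one-part partition contributes nothing, whence $\Size{\mathcal{W}_i\cap\mathcal{B}_k}=p_{2,r}=p_{2,k+2+i-n}$. I expect this accounting for the one-part partition to be the only genuinely delicate point; the remainder of the count is carried by the single inequality $r-1\le k-1$, together with the bookkeeping of which basis vectors are truly new at step $i$.

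For the rank of $\mathfrak{N}_i/\mathfrak{N}_{i-1}$: by Theorem~\ref{thm:homogeneous_normalizers} the subrings $\mathfrak{N}_{i-1}\subseteq\mathfrak{N}_i$ are homogeneous with bases $\mathcal{N}_{i-1}\subseteq\mathcal{N}_i\subseteq\mathcal{B}$, so $\mathfrak{N}_i/\mathfrak{N}_{i-1}$ is a free $\Z_2$-module whose rank is $\Size{\mathcal{N}_i\setminus\mathcal{N}_{i-1}}$, and $\mathcal{N}_i\setminus\mathcal{N}_{i-1}=\mathcal{W}_i$ by \eqref{eq:ni}. Since $\mathcal{B}$ is the disjoint union of the $\mathcal{B}_k$ and, by \eqref{eqwi}, $\mathcal{W}_i$ is supported on $n-i+1\le k\le n$, this rank equals $\sum_{k=n-i+1}^{n}\Size{\mathcal{W}_i\cap\mathcal{B}_k}=\sum_{k=n-i+1}^{n}p_{2,k+2+i-n}$. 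Re-indexing by $t=k+2+i-n$, which runs through $3,4,\dots,i+2$ as $k$ runs from $n-i+1$ to $n$, and using $p_{2,1}=p_{2,2}=0$, the sum collapses to $\sum_{t=1}^{i+2}p_{2,t}=q_{2,i+2}$, as claimed.
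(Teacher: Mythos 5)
Your proof is correct and follows exactly the route the paper intends: the paper states this result as a direct corollary of Theorem~\ref{thm:homogeneous_normalizers} without writing out the count, and your argument supplies precisely the omitted bookkeeping (the bound $r-1\le k-1$ from $i\le n-2$, the exclusion of the one-part partition via the disjointness of $\mathcal{N}_{i-1}\mathrel{\dot\cup}\mathcal{W}_i$, and the re-indexed sum using $p_{2,1}=p_{2,2}=0$). No gaps.
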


Notice that the result of Theorem~\ref{cor:distinct_parts} is in complete
accordance with  the analogous result found  in the case of  the chain of
normalizers in the Sylow \(2\)-subgroup of \(\Sym(2^n)\) starting from
an   elementary   abelian   regular  subgroup   (\cite[Corollary
5]{Aragona2021}). This is not surprising: we will indeed prove in Sec.~\ref{sec:m2} that there exists a correspondence 
between the two constructions.
More importantly, the use of the Lie ring of partitions introduced here allows 
to easily generalize the  result to the case $m>2$.

\begin{theorem}\label{cor:main}
  Let      \(m>2\)      and      \(1\le      i\le      n-1\).      Then, for $n-i+1 \leq k \leq n$ we have
  \(\Size{\mathcal{W}_i\cap  \mathcal{B}_k}=  p_{m,{k+1+i-n}} \)  and therefore the
  free  \(\Z_m\)-module   \(\mathfrak{N}_{i}/\mathfrak{N}_{i-1}\)  has
  rank \(q_{m,i+1}\).
\end{theorem}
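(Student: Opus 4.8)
The plan is to derive the statement directly from Theorem~\ref{thm:homogeneous_normalizers} together with a purely combinatorial count of the basis elements created at the $i$-th step. Since $\mathcal{B}$ is a $\Z_m$-basis of $\Lie$ and, by Theorem~\ref{thm:homogeneous_normalizers}, $\mathfrak{N}_{i-1}$ and $\mathfrak{N}_{i}$ are the $\Z_m$-spans of the subsets $\mathcal{N}_{i-1}\subseteq\mathcal{N}_{i}$ of $\mathcal{B}$, the quotient $\mathfrak{N}_{i}/\mathfrak{N}_{i-1}$ is free of rank $\Size{\mathcal{N}_{i}\setminus\mathcal{N}_{i-1}}$. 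By \eqref{eq:ni} we have $\mathcal{N}_{i}=\mathcal{N}_{i-1}\cup\mathcal{W}_{i}$, so $\mathcal{N}_{i}\setminus\mathcal{N}_{i-1}=\mathcal{W}_{i}\setminus\mathcal{N}_{i-1}$; moreover the sets $\mathcal{W}_{1},\dots,\mathcal{W}_{i}$ are pairwise disjoint (the weight condition in \eqref{eqwi} pins down the index) and $\mathcal{W}_{i}\cap\mathcal{T}=\emptyset$ (every element of $\mathcal{W}_{i}$ has $\wt(\Lambda)\ge 2$), so $\mathcal{W}_{i}\cap\mathcal{N}_{i-1}=\mathcal{W}_{i}\cap\mathcal{U}$ consists precisely of those $x^{\Lambda}\partial_{k}\in\mathcal{W}_{i}$ with $x^{\Lambda}=x_{j}$ for some $j<k$. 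Thus it suffices to show $\Size{(\mathcal{W}_{i}\setminus\mathcal{U})\cap\mathcal{B}_{k}}=p_{m,k+1+i-n}$ for each $k$ with $n-i+1\le k\le n$, and then to sum over $k$.

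For the count, fix such a $k$ and set $N=k+1+i-n$ (recall $\delta_{m,2}=0$ since $m>2$). From $1\le i\le n-1$ and $n-i+1\le k\le n$ one gets $2\le N\le k$, with $N=k$ exactly when $i=n-1$. By \eqref{eqwi} and the definition of $\mathcal{B}_{k}$, the set $\mathcal{W}_{i}\cap\mathcal{B}_{k}$ is in bijection with the partitions of $N$ all of whose multiplicities are at most $m-1$ and whose largest part is at most $k-1$. I would split into two cases. If $i\le n-2$ then $N\le k-1$, the largest-part condition is vacuous, and $\mathcal{W}_{i}\cap\mathcal{B}_{k}$ corresponds to \emph{all} partitions of $N$ with multiplicities $\le m-1$; among these, the partition $(N)$ gives $x_{N}\partial_{k}=x_{k+1+i-n}\partial_{k}\in\mathcal{U}$ (because $N<k$), while every partition of $N$ into at least two parts gives an element of $\mathcal{B}_{k}$ outside $\mathcal{U}$. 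If $i=n-1$ then $N=k$ and the condition ``largest part $\le k-1$'' already excludes $(N)$, so $\mathcal{W}_{i}\cap\mathcal{B}_{k}$ contains no element of the form $x_{j}\partial_{k}$ and hence $(\mathcal{W}_{i}\setminus\mathcal{U})\cap\mathcal{B}_{k}=\mathcal{W}_{i}\cap\mathcal{B}_{k}$. In both cases $(\mathcal{W}_{i}\setminus\mathcal{U})\cap\mathcal{B}_{k}$ is in bijection with the partitions of $N$ into at least two parts with all multiplicities at most $m-1$, whose number is $p_{m,N}=p_{m,k+1+i-n}$ by definition.

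Finally I would sum over $k$: as $k$ runs over $n-i+1,\dots,n$ the integer $N=k+1+i-n$ runs over $2,\dots,i+1$, so
\[
\Size{\mathcal{N}_{i}\setminus\mathcal{N}_{i-1}}
=\sum_{k=n-i+1}^{n}p_{m,k+1+i-n}
=\sum_{N=2}^{i+1}p_{m,N}
=\sum_{N=1}^{i+1}p_{m,N}
=q_{m,i+1},
\]
where the penultimate equality uses $p_{m,1}=0$; this proves both assertions. The step requiring the most care is the bookkeeping of the partitions of the form $(N)$: for $i\le n-2$ they do belong to $\mathcal{W}_{i}\cap\mathcal{B}_{k}$ but are already present in $\mathcal{U}=\mathcal{N}_{0}$, while for $i=n-1$ the largest-part bound in the definition of $\mathcal{B}_{k}$ discards them; recognizing that in either situation the genuinely new basis elements are exactly those coming from partitions into at least two parts — hence counted by $p_{m,\cdot}$ — is the crux. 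It is precisely here that the case $m>2$ differs from $m=2$: the weight in \eqref{eqwi} equals $k+i-n+1$ instead of $k+i-n+2$, which lets the index $i$ reach $n-1$ and yields the partial sums $q_{m,i+1}$ rather than $q_{m,i+2}$.
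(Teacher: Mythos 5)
Your proof is correct and follows exactly the route the paper intends: Theorem~\ref{cor:main} is stated there as a direct consequence of Theorem~\ref{thm:homogeneous_normalizers} with the combinatorial count left implicit, and you have supplied precisely that count (rank of the quotient equals $\Size{\mathcal{N}_i\setminus\mathcal{N}_{i-1}}$, then enumerate $\mathcal{W}_i\cap\mathcal{B}_k$ by partitions of $N=k+1+i-n$). Your bookkeeping of the single-part partition $(N)$ --- which lands in $\mathcal{U}$ when $i\le n-2$ and is excluded by the maximal-part bound defining $\mathcal{B}_k$ when $i=n-1$ --- is the right way to reconcile the disjoint union in \eqref{eq:ni} with the claimed value $p_{m,k+1+i-n}$, and is a point the paper glosses over.
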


\section{An explicit correspondence in the case \(m=2\)}\label{sec:m2}
 In this section we will assume $m=2$ without further reference. As already anticipated above, we now prove that \emph{for any $i\geq 1$} the ranks of the quotients \(\mathfrak{N}_i/\mathfrak{N}_{i-1}\) are equal to the logarithms $\log_{2}\Size{N_{i}  : N_{i-1}}$ of the factors of the normalizer chain in the Sylow \(2\)-subgroup of \(\Sym(2^n)\) starting from an elementary abelian regular subgroup. This is constructively accomplished by showing a bijection which maps rigid commutators into basis elements of the Lie ring of partitions and which preserves commutators.

\subsection{Correspondence with Sylow \(2\)-subgroups of $\Sym(2^n)$}	\label{sub:corresp}  
We recall here some fundamental facts about rigid commutators, although we advise the reader to refer to Aragona et al.~\cite{Aragona2021} for notation and results. We use the punctured notation as in the mentioned paper. More precisely, if \(\Set{s_1, s_2, \dots,s_n}\) is the considered set of generators of the Sylow \(2\)-subgroup of \(\Sym(2^n)\) and \(X=\Set{x_1> x_2>\dots > x_\ell}\) is a subset of \(\Set{1,\dots, n}\), we denote by \([X]\) the  left normed commutator \([s_{x_1},s_{x_2},\dots,s_{x_\ell}]\). 
The \emph{rigid commutator based at $b$ and punctured at $I$} is 
\begin{equation*}
	\puncture{b}{I} = [  \Set{1,\dots,b} \setminus
	I] \in \mathcal{R}^*,
\end{equation*}
where \(1\le b \le n\) and \(I\subseteq\Set{1,\dots,b-1}\) and the symbol \(\mathcal{R}^*\) denotes  the set of non-trivial rigid commutators.  We also denote \(\mathcal{R}= \mathcal{R}^*\cup \Set{[\emptyset]}\).
We will use the commutator formula
\begin{equation}\label{eq:puncture}
	\bigl[\,
	\puncture{a}{I} ,
	\puncture{b}{J}
	\, \bigr] = 
	\begin{cases}
		\puncture{\max (a,b)}{\;  (I \cup  J)\setminus\Set{\min(a,b)}} &
		\text{if $\min (a,b)\in I \cup J$}
		\\
		1 & \text{otherwise}
	\end{cases}
\end{equation}
proved in Proposition~4 of the referenced paper. We also remind  that the elementary abelian regular group $T$ is obtained in terms of rigid commutators as \(T=\Span{t_1,\dots,t_n}\), where \(t_i=\puncture{i}{\emptyset}\) for $1 \leq i \leq n$.\\

The mentioned bijection that will be soon defined relies crucially on the representation of rigid commutators provided by the following result.
\begin{lemma}\label{lem:disjoint}
	Let \(\mathcal{S}\subseteq \mathcal{R}\) be normalized by \(\Set{t_1,\dots,t_n}\). If \(\puncture{a}{X}\) is any rigid commutator normalizing \(\mathcal{S}\) and \(\puncture{b}{Y}\in \mathcal{S}\), then there exists a rigid commutator \(\puncture{b}{Z}\in  \mathcal{S}\) such that \(Z\cap X=\emptyset\) and \[\bigl[\puncture{a}{X}, \puncture{b}{Y}\bigr]= \bigl[\puncture{a}{X}, \puncture{b}{Z}\bigr].\]
\end{lemma}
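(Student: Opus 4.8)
The plan is to exploit the commutator formula \eqref{eq:puncture} together with the hypothesis that \(\mathcal{S}\) is normalized by \(\Set{t_1,\dots,t_n}\), where \(t_i = \puncture{i}{\emptyset}\). First I would record what \eqref{eq:puncture} tells us about the shape of \(\bigl[\puncture{a}{X},\puncture{b}{Y}\bigr]\): the bracket is nontrivial precisely when \(\min(a,b)\in X\cup Y\), and in that case the result is a rigid commutator punctured at \((X\cup Y)\setminus\Set{\min(a,b)}\) and based at \(\max(a,b)\). So the bracket \(\bigl[\puncture{a}{X},\puncture{b}{Y}\bigr]\) depends on \(Y\) only through the set \((X\cup Y)\setminus\Set{\min(a,b)}\); in particular, modifying \(Y\) by deleting or inserting elements that already lie in \(X\) does not change the value of the bracket. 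This observation is the crux: I want to massage \(\puncture{b}{Y}\) into some \(\puncture{b}{Z}\in\mathcal{S}\) with \(Z\cap X=\emptyset\) while keeping the bracket with \(\puncture{a}{X}\) unchanged.

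The mechanism for staying inside \(\mathcal{S}\) while altering the puncture set is conjugation by the \(t_k\)'s, which \(\mathcal{S}\) is closed under. Using \eqref{eq:puncture} with \(a=k\) and \(I=\emptyset\), we get \(\bigl[t_k,\puncture{b}{Y}\bigr]=\puncture{b}{Y\setminus\Set{k}}\) when \(k\in Y\) and \(k<b\) (and trivial otherwise), so that the conjugate \(\puncture{b}{Y}^{t_k}=\puncture{b}{Y}\cdot\bigl[\puncture{b}{Y},t_k\bigr]\) lies in the subgroup generated by \(\puncture{b}{Y}\) and \(\puncture{b}{Y\setminus\Set{k}}\), both of which must then be expressible in terms of elements of \(\mathcal{S}\). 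I would argue that, because \(\mathcal{S}\subseteq\mathcal{R}\) is a set of rigid commutators closed under the action of the \(t_k\), for each element \(\puncture{b}{Y}\in\mathcal{S}\) and each \(k\in Y\cap\Set{1,\dots,b-1}\), the commutator \(\puncture{b}{Y\setminus\Set{k}}\) also lies in \(\mathcal{S}\) — iterating, every \(\puncture{b}{Y'}\) with \(Y'\subseteq Y\) is in \(\mathcal{S}\). (The precise bookkeeping here — translating ``\(\mathcal{S}\) is normalized by the \(t_k\)'' into ``\(\mathcal{S}\) is downward closed under removing puncture elements'' — is where I would be most careful, since \(\mathcal{S}\) is a subset of \(\mathcal{R}\) rather than a subgroup, so one must phrase normalization correctly and check that the relevant rigid commutators cannot cancel against each other in the mod-\(2\) setting.)

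Granting that downward-closure, the construction of \(Z\) is immediate: set \(Z = Y\setminus X\). Since \(Z\subseteq Y\), we have \(\puncture{b}{Z}\in\mathcal{S}\), and by construction \(Z\cap X=\emptyset\). It remains to check \(\bigl[\puncture{a}{X},\puncture{b}{Y}\bigr]=\bigl[\puncture{a}{X},\puncture{b}{Z}\bigr]\). Write \(\mu=\min(a,b)\). If \(\mu\notin X\), then \(\mu\notin X\cup Z\) as well (since \(Z\) was obtained from \(Y\) by removing elements of \(X\); and \(\mu\in Y\) would force \(\mu\in Z\) unless \(\mu\in X\), contradiction), so neither bracket depends on whether we use \(Y\) or \(Z\) — wait, one must treat the case \(\mu\in Y\setminus X\) separately. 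I would split into cases on whether \(\mu\in X\): if \(\mu\in X\), both brackets are nontrivial and equal \(\puncture{\max(a,b)}{(X\cup Y)\setminus\Set{\mu}}=\puncture{\max(a,b)}{(X\cup Z)\setminus\Set{\mu}}\) because \(X\cup Y=X\cup Z\); if \(\mu\notin X\), then since \(Z\cap X=\emptyset\) forced \(\mu\in Y\iff\mu\in Z\) (removal of \(X\)-elements doesn't touch \(\mu\)), so the ``\(\min\in I\cup J\)'' condition has the same truth value for \((X,Y)\) and \((X,Z)\), and when it holds the outputs again agree since \(X\cup Y=X\cup Z\). Either way the two brackets coincide, which is the claim. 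The main obstacle, as noted, is the second paragraph: cleanly deriving downward-closure of \(\mathcal{S}\) from the normalization hypothesis, including ruling out degenerate cancellations.
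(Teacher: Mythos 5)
Your proposal is correct and follows essentially the same route as the paper: the paper also sets \(Z=Y\setminus X\), obtains \(\puncture{b}{Y\setminus\Set{i}}=[\puncture{b}{Y},t_i]\in\mathcal{S}\) for \(i\in X\cap Y\) from the normalization hypothesis, and iterates while noting that the bracket with \(\puncture{a}{X}\) is unchanged since \(X\cup Y=X\cup(Y\setminus\Set{i})\). The ``downward closure'' step you worry about is exactly this one-line application of Eq.~\eqref{eq:puncture}, with no cancellation issue to rule out.
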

\begin{proof}
	Let \(i\in X\cap Y\). Note that \[\bigl [\puncture{a}{X},\puncture{b}{Y\setminus\Set{i}}\bigr]= \bigl [\puncture{a}{X},[\puncture{b}{Y},t_i]\bigr] = \bigl[\puncture{a}{X},\puncture{b}{Y}\bigr]. \] In this way we can remove one by one from \(Y\) all the elements in \(X\cap Y\) obtaining \(Z\) and preserving the commutator.
\end{proof}
Let us now define the bijection $f$ between basis elements of the Lie ring and the set of rigid commutators. We will show later that $f$ preserve commutators.
\begin{definition}\label{def:bije}
Let  \(\bij\colon \mathcal{B} \cup \Set{0} \to \mathcal{R} \) be defined by letting \(\bij(0)=[\emptyset]\) and
\[
f(x^\Lambda\partial_k)=\puncture{k}{ \supp(\Lambda)}.
\]
\end{definition}
	\begin{remark}\label{rem:comm}
		By Eq.~\eqref{eq:puncture} we have that if either \([x^\Lambda\partial_k,x^\Gamma\partial_h]\ne 0\) or \(\Lambda\cap \Gamma=\emptyset\), then 
		\[f\left([x^\Lambda\partial_k,x^\Gamma\partial_h]\right)= \bigl[f\left(x^\Lambda\partial_k\right),f\left(x^\Gamma\partial_h\right)\bigr]. \]
		We note indeed that if \([x^\Lambda\partial_k,x^\Gamma\partial_h]= 0\) and  \(\Lambda\cap \Gamma=\emptyset\), then \(k\notin \Gamma\) and \(h\notin \Lambda\) and hence both members of the previous equation are the identity element.
	\end{remark}
\begin{lemma}
	If \(S\subseteq \mathcal{B}\cup \{0\}\) is normalized by \(\mathcal{T}=\Set{\partial_1,\dots, \partial_{n}}\) and is closed under commutation, then \(x^\Theta\partial_u\) normalizes \(S\) if and only if \(\bij(x^\Theta\partial_u)\) normalizes \(\mathcal{S}=\bij(S)\).
\end{lemma}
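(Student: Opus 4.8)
The statement is an ``if and only if'' about normalizing, so the plan is to show both directions by translating conditions through the bijection $\bij$, using Remark~\ref{rem:comm} to move commutators across $\bij$ and Lemma~\ref{lem:disjoint} to handle the subtlety that $\bij$ does \emph{not} commute with brackets when supports overlap and the bracket vanishes. First I would fix $x^\Theta\partial_u\in\mathcal{B}$ and set $a=u$, $X=\supp(\Theta)$, so that $\bij(x^\Theta\partial_u)=\puncture{a}{X}$; I would also observe at the outset that since $S$ is normalized by $\mathcal{T}$ and closed under commutation, so is $\mathcal{S}=\bij(S)$ (the image of $\mathcal{T}$ under $\bij$ is $\Set{t_1,\dots,t_n}$ up to the conventions, and $\bij$ preserves brackets on elements whose supports are suitably disjoint, which applies to brackets with the $t_i$ and with elements already made disjoint), so that the hypotheses of Lemma~\ref{lem:disjoint} are available.

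\textbf{Forward direction.} Assume $x^\Theta\partial_u$ normalizes $S$, i.e.\ $[x^\Theta\partial_u, x^\Gamma\partial_h]\in\Z_m S\cup\{0\}$ for every $x^\Gamma\partial_h\in S$. I want $\bigl[\puncture{a}{X},\puncture{b}{Y}\bigr]\in\mathcal{S}$ for every $\puncture{b}{Y}\in\mathcal{S}$. Pick $x^\Gamma\partial_h\in S$ with $\bij(x^\Gamma\partial_h)=\puncture{b}{Y}$; by Lemma~\ref{lem:disjoint} I may replace $Y$ by a disjoint $Z$ with $Z\cap X=\emptyset$ and the same commutator value, and correspondingly replace $x^\Gamma\partial_h$ by the basis element $x^{\Gamma'}\partial_h\in S$ with $\supp(\Gamma')=Z$ (this element lies in $S$ because, as in the proof of Lemma~\ref{lem:disjoint}, it is obtained from $x^\Gamma\partial_h$ by successive brackets with the $\partial_i$, $i\in X\cap Y$, which lie in $S$ by closure under commutation and normalization by $\mathcal{T}$). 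Now $\supp(\Theta)\cap\supp(\Gamma')=\emptyset$, so Remark~\ref{rem:comm} applies and $\bigl[\puncture{a}{X},\puncture{b}{Z}\bigr]=\bij\bigl([x^\Theta\partial_u, x^{\Gamma'}\partial_h]\bigr)$; the right-hand element is in $\bij(\Z_m S\cup\{0\})=\mathcal{S}$ by hypothesis. Hence $\puncture{a}{X}$ normalizes $\mathcal{S}$.

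\textbf{Reverse direction.} Assume $\puncture{a}{X}=\bij(x^\Theta\partial_u)$ normalizes $\mathcal{S}$. Given $x^\Gamma\partial_h\in S$, I want $[x^\Theta\partial_u, x^\Gamma\partial_h]\in\Z_m S\cup\{0\}$. If the bracket is zero there is nothing to prove. Otherwise, by Remark~\ref{rem:comm} (applicable precisely because the bracket is nonzero) $\bij\bigl([x^\Theta\partial_u, x^\Gamma\partial_h]\bigr)=\bigl[\puncture{a}{X},\puncture{b}{Y}\bigr]$ with $\puncture{b}{Y}=\bij(x^\Gamma\partial_h)\in\mathcal{S}$, and by hypothesis this lies in $\mathcal{S}$. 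Since $\bij$ restricted to $\mathcal{B}\cup\{0\}$ is a bijection onto $\mathcal{R}$, pulling back gives that $[x^\Theta\partial_u, x^\Gamma\partial_h]$ is (a scalar multiple, in $\Z_m=\Z_2$ necessarily $1$, of) an element of $S$, i.e.\ lies in $\Z_m S$. Hence $x^\Theta\partial_u$ normalizes $S$.

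\textbf{Main obstacle.} The one genuinely delicate point is the support-overlap gap in Remark~\ref{rem:comm}: $\bij$ transports brackets faithfully only when the bracket is nonzero or the supports are disjoint. In the forward direction the bracket may well be zero while supports overlap, so I cannot apply Remark~\ref{rem:comm} directly; the fix is precisely Lemma~\ref{lem:disjoint}, which lets me trade $\puncture{b}{Y}$ for a disjoint-support representative $\puncture{b}{Z}$ inside $\mathcal{S}$ without changing the commutator with $\puncture{a}{X}$, \emph{and} the corresponding trade of $x^\Gamma\partial_h$ for $x^{\Gamma'}\partial_h$ inside $S$ using that $S$ is closed under commutation and $\mathcal{T}$-normalized. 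Making sure this ``disjointification'' is carried out symmetrically on both sides of $\bij$ — and checking that the resulting element really is in $S$, not merely in $\Z_m S$ (harmless here since $m=2$) — is the part that needs care; everything else is a direct application of Remark~\ref{rem:comm} and the bijectivity of $\bij$.
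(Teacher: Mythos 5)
Your proposal is correct and follows essentially the same route as the paper's own proof: the forward direction is handled by first passing to a disjoint-support representative via Lemma~\ref{lem:disjoint} (checking it still lies in $S$ using normalization by $\mathcal{T}$) and then transporting the bracket through $\bij$ via Remark~\ref{rem:comm}, while the reverse direction splits on whether the bracket vanishes and uses bijectivity of $\bij$ to pull the containment back. The only cosmetic difference is that the paper also records, as a first step, that $\mathcal{S}$ is closed under commutation, which you fold into your preliminary observation.
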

\begin{proof}
	We show first that \(\mathcal{S}\) is closed under commutation. 
Notice that, since \([\mathcal{T},S]\subseteq S\), by Remark~\ref{rem:comm}  we have that \(\bij(\mathcal{T})=\Set{t_1,\dots,t_n}\) normalizes \(\mathcal{S}\).
	Let \(\bij(x^\Lambda\partial_k)\) and \(\bij(x^\Gamma\partial_h)\) be two elements in \(\mathcal{S}\). By Lemma~\ref{lem:disjoint}, and by Remark~\ref{rem:comm} we have
	 \[\left[\bij(x^\Lambda\partial_k),\bij(x^\Gamma\partial_h)\right]= 
	\left[\bij(x^\Lambda\partial_k),\bij(x^{\Gamma'}\partial_h)\right]= \bij\left([x^\Lambda\partial_k,x^{\Gamma'}\partial_h]\right)\in \mathcal{S}\] for some \(\Gamma'\) such that \(\supp(\Lambda)\cap \supp({\Gamma'})=\emptyset\). 
	
	Let \(x^\Lambda\partial_k\in S\) and \(x^\Theta\partial_u\) be a basis element in the Lie ring normalizing \(S\). The commutator \([x^\Theta\partial_u, x^\Lambda\partial_k]\in S\), hence, by Lemma~\ref{lem:disjoint}, there exists \(\Lambda'\) such that \(\supp(\Lambda')\cap \supp(\Theta)=\emptyset\) and \[[\bij(x^\Theta\partial_u),\bij(x^\Lambda\partial_k)]=[\bij(x^\Theta\partial_u),\bij(x^{\Lambda'}\partial_k)]=\bij\bigl([x^\Theta\partial_u,x^{\Lambda'}\partial_k]\bigr)\in \mathcal{S}.\] 
	Therefore \(\bij(x^\Theta\partial_u)\) normalizes \(\mathcal{S}\). Conversely, if \(\bij(x^\Theta\partial_u)\) normalizes \(\mathcal{S}\) and \(\bij(x^{\Lambda'}\partial_k)\in \mathcal{S}\), then \( [\bij(x^\Theta\partial_u),\bij(x^\Lambda\partial_k)] \in \mathcal{S}
	\). Thus either \( [x^\Theta\partial_u,x^\Lambda\partial_k]=0 \in S
	\) or \[[\bij(x^\Theta\partial_u),\bij(x^\Lambda\partial_k)] = \bij\left([x^\Theta\partial_u,x^\Lambda\partial_k]\right) \in \mathcal{S}=\bij(S).\] Hence \([x^\Theta\partial_u,x^\Lambda\partial_k]\in S\), as \(\bij\) is a bijection, and so \(x^\Theta\partial_u\) normalizes \(S\). 
\end{proof}
	We are finally ready to prove the claimed result.
	\begin{theorem}\label{thm:liechan}
		For all non-negative integers \(i\) 
		the term \(N_i\) of the normalizer chain is the saturated subgroup generated by the saturated set of rigid commutators \(\bij(\mathcal{N}_i)\).
		In particular, the following equality holds for each $i \geq 1$:
		\[\rk\left(\mathfrak{N}_i/\mathfrak{N}_{i-1}\right)=\log_{2}\Size{N_{i}  : N_{i-1}}.\]
	\end{theorem}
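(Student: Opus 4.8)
The plan is to proceed by induction on $i$, using the previous lemma as the engine. First I would establish the base case: the term $N_0 = N_\Sigma(T)$ corresponds to $\mathfrak{N}_0$, i.e.\ $\bij(\mathcal{N}_0)$ generates $N_0$ as a saturated subgroup. Since $T = \Span{t_1,\dots,t_n}$ with $t_i = \puncture{i}{\emptyset} = \bij(\partial_i)$, and $\mathcal{T} = \Set{\partial_1,\dots,\partial_n}$, this amounts to checking that the saturated subgroup generated by $\bij(\mathcal{N}_0) = \bij(\mathcal{U})$ is precisely $N_\Sigma(T)$; this is exactly the content of~\cite[Corollary~5]{Aragona2021} (or can be re-derived here) translated through $\bij$. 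I would also note that $\mathcal{N}_0 = \mathcal{U}$ is closed under commutation and normalized by $\mathcal{T}$, so the hypotheses of the previous lemma are met at every stage.

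For the inductive step, suppose $N_{i-1}$ is the saturated subgroup generated by the saturated set $\bij(\mathcal{N}_{i-1})$, where $\mathcal{N}_{i-1} \subseteq \mathcal{B}$ is closed under commutation and normalized by $\mathcal{T}$. By Theorem~\ref{thm:homogeneous_normalizers} together with Eq.~\eqref{eq:normalizer_comm}, the idealizer $\mathfrak{N}_i$ is the homogeneous Lie subring spanned by $\mathcal{N}_i = N_{\mathcal{B}}(\mathcal{N}_{i-1})$, and $\mathcal{N}_i$ is again closed under commutation and normalized by $\mathcal{T}$ (the latter because $\mathcal{T} \subseteq \mathcal{N}_0 \subseteq \mathcal{N}_i$ and $\mathcal{N}_i$ is a Lie subring). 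The previous lemma, applied with $S = \mathcal{N}_{i-1}$, says that a basis element $x^\Theta\partial_u$ normalizes $\mathcal{N}_{i-1}$ if and only if $\bij(x^\Theta\partial_u)$ normalizes $\bij(\mathcal{N}_{i-1})$. Hence $\bij(\mathcal{N}_i) = \bij(N_{\mathcal{B}}(\mathcal{N}_{i-1}))$ is exactly the set of rigid commutators normalizing $\bij(\mathcal{N}_{i-1})$. I then need the fact — which is how the normalizer chain is computed via rigid commutators in~\cite{Aragona2021} — that the normalizer $N_\Sigma(N_{i-1})$ of a saturated subgroup $N_{i-1}$ is the saturated subgroup generated by those rigid commutators that normalize the saturated set $\bij(\mathcal{N}_{i-1})$. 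Combining these two facts identifies $N_i = N_\Sigma(N_{i-1})$ with the saturated subgroup generated by $\bij(\mathcal{N}_i)$, completing the induction.

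Once the set-theoretic identification $N_i \leftrightarrow \bij(\mathcal{N}_i)$ is in place for all $i$, the rank equality is immediate: $\bij$ restricts to a bijection $\mathcal{N}_i \setminus \mathcal{N}_{i-1} \to \bij(\mathcal{N}_i) \setminus \bij(\mathcal{N}_{i-1})$, the left side has size $\rk(\mathfrak{N}_i/\mathfrak{N}_{i-1})$ since $\mathfrak{N}_i, \mathfrak{N}_{i-1}$ are homogeneous with bases $\mathcal{N}_i, \mathcal{N}_{i-1}$ over $\Z_2$, and the right side has size $\log_2\Size{N_i : N_{i-1}}$ because a saturated subgroup of $\Sigma$ has order $2$ raised to the number of rigid commutators it contains, so the index of one saturated subgroup in another is $2$ raised to the difference of those counts. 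This gives $\rk(\mathfrak{N}_i/\mathfrak{N}_{i-1}) = \log_2\Size{N_i : N_{i-1}}$.

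The main obstacle I anticipate is the bookkeeping around ``saturated'' subgroups and sets: one must be careful that $\bij$ (which only sees the supports $\supp(\Lambda)$ and the base $k$, discarding multiplicities, which for $m=2$ carry no information anyway) genuinely sets up a bijection between saturated sets of rigid commutators and subsets $S \subseteq \mathcal{B}$ closed under the relevant operations, and that the operations ``take the normalizer'' on the group side and ``take the idealizer'' on the Lie-ring side correspond under $\bij$ even at the level of the generated (saturated) objects, not merely the generating sets. This is precisely where the previous lemma and Lemma~\ref{lem:disjoint} do the work — reducing every commutator computation to the disjoint-support case where Remark~\ref{rem:comm} makes $\bij$ a commutator-preserving map — but assembling these into the statement about generated subgroups, and invoking the correct structural result from~\cite{Aragona2021} about how normalizers of saturated subgroups of $\Sigma$ are themselves saturated, is the delicate part of the argument.
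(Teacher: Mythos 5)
Your proposal is correct and follows essentially the same route as the paper: the paper's proof is a one-line appeal to the preceding lemma, Theorem~\ref{thm:homogeneous_normalizers}, and Corollary~2 and Proposition~5 of Aragona et al.~\cite{Aragona2021}, which are exactly the structural facts about saturated subgroups and their normalizers that you identify and assemble by induction. Your write-up simply makes explicit the inductive bookkeeping that the paper leaves implicit.
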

\begin{proof}
		This is a straightforward consequence of the previous lemma applying  Theorem~\ref{thm:homogeneous_normalizers} and Corollary~2 and Proposition~5 from Aragona et al.~\cite{Aragona2021}.
\end{proof}

\section{Unrefinable partitions with repeated parts and the $(n-1)$-th idealizer}\label{sec:unref}

The definition of unrefinability of a partition into distinct parts has been given in Aragona et al.~\cite{Aragona2022} in connection with the $(n-1)$-th term in the chain of normalizers in $\Sym(2^n)$. We introduce here a natural generalization to partitions whose parts can be repeated at most \(m-1\) times and we show the connection (cf.~Theorem~\ref{prop:unrefinable}) with the first idealizer not following the rules of Theorems~\ref{cor:distinct_parts} and \ref{cor:main}, i.e.\ the \((n-\delta_{m,2})\)-th.

\begin{definition}\label{def:unref}
  Let \(\Lambda\in \Part_m\) be  a partition where each part  has multiplicitiy at most \(m-1\) and such that there
  exist indices \(j_1<\dots<j_\ell<j\) satisfying the conditions
  \begin{itemize}
  \item \(j=\sum_{i=1}^\ell a_ij_i\), with \(a_i\le m-1-\lambda_{j_i}\),
  \item \(\lambda_j\ge 1\).
  \end{itemize}
  The partition \(\Theta\) obtained from \(\Lambda\) removing the part
  \(j\)           and           inserting          the           parts
  \({j_1},\dots  ,  {j_\ell}\),  each taken  \(a_i\) times, is   said  to  be  an
  \(a\)-refinement    of     \(\Lambda\) where \(a=\sum a_{i}\).     We     shall    write
  \(\Theta\prec   \Lambda\)    to   mean   that   \(\Theta\)    is   a
  \(2\)-refinement of \(\Lambda\). A partition admitting a refinement is said to be \emph{refinable in~\(\Part_{m}\)}, otherwise it is said to be \emph{unrefinable in~\(\Part_{m}\)}.
\end{definition}
\begin{remark}
Notice that, although the part $j$ can appear with multiplicity up to $m-1$, the operation of refinement as in Definition~\ref{def:unref} is performed on a single part.
\end{remark}

\begin{proposition}\label{prop:refinement_length}
  Every  \(a\)-refinement of  a partition  \(\Lambda\) is  obtained
  applying exactly \(a-1\) subsequent \(2\)-refinements.
\end{proposition}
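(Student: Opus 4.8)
The plan is to argue by induction on $a$, stripping off one $2$-refinement at a time. Note first that the word ``exactly'' is almost free: a $2$-refinement removes one part and inserts two, so it increases the number of parts (counted with multiplicity) by one, while an $a$-refinement increases it by $a-1$; hence every chain of $2$-refinements joining $\Lambda$ to $\Theta$ has precisely $a-1$ links, and the content of the statement is the \emph{existence} of such a chain. The base case $a=2$ is the definition of $\prec$, so assume $a\ge 3$ and let $\Theta$ be an $a$-refinement of $\Lambda$ at the part $j$: deleting one copy of $j$ from $\Lambda$ and inserting the multiset $M$ consisting of the $j_i$'s, each with multiplicity $a_i$, so that $|M|=a$, every element of $M$ is $<j$, $\lambda_j\ge 1$, and $\lambda_v+(\text{multiplicity of }v\text{ in }M)\le m-1$ for every $v$ (this last inequality is exactly the condition $a_i\le m-1-\lambda_{j_i}$).

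First I would single out $u_1=\max M$ together with the second largest member $u_2$ of $M$ (so $u_2=u_1$ when $u_1$ occurs in $M$ with multiplicity at least $2$), and set $w=u_1+u_2$. The purpose of choosing the two \emph{largest} elements is that $w>u_1$ forces $w\notin M$; moreover $w<j$ since $M$ has a third element. I then split into two cases according to whether the multiplicity of $w$ in $\Lambda$ has already reached the cap $m-1$. If $\lambda_w\le m-2$, put $M^{*}=(M\setminus\{u_1,u_2\})\cup\{w\}$ (multiset operations) and $\Phi=(\Lambda\setminus\{j\})\cup M^{*}$. One checks that $\Phi\in\Part_m$ — the only multiplicity that could overshoot is that of $w$, which becomes $\lambda_w+1\le m-1$ — that $\Phi$ is an $(a-1)$-refinement of $\Lambda$ at $j$, and that $\Theta=(\Phi\setminus\{w\})\cup\{u_1,u_2\}$ is a $2$-refinement of $\Phi$, the only delicate multiplicities being those of $u_1,u_2$, which fall back to $\lambda_{u_i}+(\text{multiplicity of }u_i\text{ in }M)\le m-1$. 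By the inductive hypothesis $\Lambda$ reaches $\Phi$ through $a-2$ successive $2$-refinements, and one more reaches $\Theta$.

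If instead $\lambda_w=m-1$, then $w$ is itself a part of $\Lambda$, and the idea is to refine one copy of $w$ first: let $\Phi'=(\Lambda\setminus\{w\})\cup\{u_1,u_2\}$. This is a genuine $2$-refinement of $\Lambda$ (the bounds on the multiplicities of $u_1,u_2$ hold exactly as above, using that $u_1$, resp.\ $u_1=u_2$, occurs in $M$ with multiplicity at least $1$, resp.\ at least $2$), and a direct computation gives $\Theta=(\Phi'\setminus\{j\})\cup M^{*}$ with $\Phi'\in\Part_m$ and the identity $\theta_v=\lambda_v+(\text{multiplicity of }v\text{ in }M)$ still valid for $v\ne j$, so $\Theta$ is an $(a-1)$-refinement of $\Phi'$ at $j$. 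By the inductive hypothesis $\Phi'$ reaches $\Theta$ through $a-2$ successive $2$-refinements, and prepending the step $\Lambda\to\Phi'$ yields the required chain. In both cases $\Theta$ is reached from $\Lambda$ by $(a-2)+1=a-1$ successive $2$-refinements, completing the induction.

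The step I expect to require the most care is checking that none of the intermediate partitions $\Phi$, $\Phi'$ violates the multiplicity cap $\lambda_v\le m-1$, and that the data produced genuinely satisfy Definition~\ref{def:unref}. This is vacuous when $m=2$ (the setting of \cite{Aragona2022}, where all parts are distinct), but for $m\ge 3$ it is precisely what dictates merging the two largest inserted parts — so that $w=u_1+u_2$ is not already among them — and the case distinction on whether $\lambda_w$ is maximal: the subcase $\lambda_w=m-1$ is the one where naively ``merging inside $M$'' fails, and it is rescued by spending the first $2$-refinement on a copy of $w$ already present in $\Lambda$.
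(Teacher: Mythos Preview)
Your proof is correct and follows essentially the paper's inductive strategy: merge two of the inserted parts into one, reduce to an \((a-1)\)-refinement, and split into two cases according to the multiplicity in \(\Lambda\) of the merged part. The only twist is that you merge the two \emph{largest} elements of the inserted multiset (so that \(w=u_1+u_2\notin M\) automatically), whereas the paper merges the two smallest indices \(j_1,j_2\) and uses the dual case distinction \(\lambda_{j_1+j_2}\ge 1\) versus \(\lambda_{j_1+j_2}=0\); your choice makes the multiplicity bookkeeping marginally cleaner.
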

\begin{proof}
  Let    \(j\)   be    the   part    of   \(\Lambda\)    replaced   by
  \(a_1\) repetitions of \(j_1\), \dots , and \(a_\ell\) repetitions of \(j_\ell\).  We  split the proof in  two cases, depending
  on \(\lambda_{j_1+j_2}\ge 1 \) or \(\lambda_{j_1+j_2}=0\), and we argue by induction,
  the    statement    being     trivial    when    \(a=2\).     Let
  \(\lambda_{j_1+j_2}\ge 1\).  First  we apply the  \(2\)-refinement that
  inserts \(j_1\) and \(j_2\) in place of \(j_1+j_2\). Subsequently we
  apply the  induction argument on  the refinement replacing  \(j\) by
  inserting    \(j_1+j_2,    j_3,\dots,   j_\ell\) via \(a-2\) subsequent \(2\)-refinements.     Suppose    now
  \(\lambda_{j_1+j_2}=0\).     We      first     apply     the
  \((a-2)\)-refinement      replacing     \(j\)      by     inserting
  \(j_1+j_2,  j_3,\dots,  j_\ell\)  and   subsequently  we  apply  the
  \(2\)-refinement  that  inserts  \(j_1\)  and \(j_2\)  in  place  of
  \(j_1+j_2\).   In both  cases by  induction a  number \(a-1\)  of
  \(2\)-refinement are applied. Since every \(2\)-refinement increases
  by one  the total number  of the parts,  \(a -1\) is  the minimum
  possible  number  of  \(2\)-refinements  that  we  can  subsequently
  perform to obtain the final \(a\)-refinement.
\end{proof}

\begin{definition} 
  Let \(\Lambda\in \Part_{m}\)  and \(t> 0\) be an  integer.  We say
  that \(\Lambda\)  is \(0\)-step refinable  if it is  unrefinable in \(\Part_m\). We
  say that  \(\Lambda\) is  \(t\)-step refinable  if \(t\)  is maximal
  such that there exists a a  sequence made of \(t\) subsequent proper
  \(2\)-refinements
  \(\Lambda_t \prec  \Lambda_{t-1} \prec\dots\prec \Lambda_0=\Lambda\) such
  that  \(\Lambda_t\) is  unrefinable.  In  other words  \(t\) is  the
  maximum number of \(2\)-refinements to be subsequently applied 
  starting  from  \(\Lambda\)  in   order  to  obtain  some  
  partition that is unrefinable in \(\Part_m\).
\end{definition}

\begin{remark}
	A straightforward consequence of Proposition~\ref{prop:refinement_length} is that a partition \(\Lambda\) in \(\Part_m\) is \(t\)-step refinable if and only if \(t\) is maximal among the \(a\) such that \(\Lambda\) admits an \(a\)-refinement. 
\end{remark}

\begin{definition}
  Let    \(\Lambda\in    \Part_{m}(n-1)\).    Consider    the    monomial
  \(f= \prod_{i=1}^{n-1}x_i^{m-1}\)                    and                   let
  \[x_{e_1}^{\mu_1}\cdots     x_{e_s}^{\mu_s}=    f/x^\Lambda,\]    where
  \(e_1 < \dots < e_s\) and \(\mu_i\ge 1\). The index  \(e_i\) is said to be the \emph{\(i\)-th
  excludant}  of \(\Lambda\) and \(\mu_i\) is its multiplicity.
  The first excludant of  \(\Lambda\) is also called its \emph{minimum excludant}.
     We  say that  \(x^\Lambda\partial_{k}\)
  satisfies the \emph{\(i\)-th  excludant condition} if \(i\)  is the minimum
  index    such    that    \(n    <   k+e_{i}\).    Moreover, we    say    that
  \(x^\Lambda\partial_{k}\)  satisfies  the  \emph{weak  \(i\)-th  excludant
  condition}   if    \(i\)   is    the   minimum   index    such   that
  \(n < k+e_1+\dots +e_{i}\).
\end{definition}
Note that if  a partition satisfies the \(i\)-th excludant condition  then it also
satisfies the weak \(j\)-th excludant condition for some \(j\le i\).

\medspace

We define the \emph{filler element} as
\begin{equation}
  \fil_{i,j}= x_ix_j\partial_{i+j} \in \mathfrak{N}_{n-1-\delta_{m,2}} \setminus \mathfrak{N}_{n-2-\delta_{m,2}}.
\end{equation}
Let  \(\Lambda\in   \Part_{m}(n-1)\)  be  a  partition   with  excludants
\(e_1<       \dots      <       e_s\)      and       suppose      that
\(x^\Lambda\partial_{k} \in \mathfrak{N}_j\) for some \(j\ge n-\delta_{m,2}\). If
\(k+e_i \le n\), then the commutator operation
\begin{equation*}
  [x^\Lambda\partial_{k}, 	\fil_{e_i,k}] = x_{e_i}x^\Lambda\partial_{k+e_i} \in \mathfrak{N}_{j-1} 
\end{equation*}
has   the  effect   of  \emph{filling}   the  \(i\)-th   excludant  of
\(\Lambda\).
\medskip

We now deal  with the main result of the section. The condition for a  partition \(\Lambda\in \Part_m(k-1)\) to be
refinable  is equivalent  to the  fact that  there exists  a partition
\(\Theta\in \Part_m(h-1)\)       with       \(h=\wt(\Theta)<k\),       such       that
\( [x^\Lambda\partial_k, x^\Theta \partial_h]\ne 0\). 

\begin{theorem}\label{prop:unrefinable}
  The       elements      of      the      set
  \(\mathcal{N}_{n-\delta_{m,2}}\setminus   \mathcal{N}_{n-1-\delta_{m,2}}\)  are  of the form  \(x^\Lambda\partial_k \in \mathcal{B}\),  where \(\Lambda\in \Part_{m}(n-1)\) is an
  unrefinable partition of \(k+1\)  satisfying the first excludant condition. 
\end{theorem}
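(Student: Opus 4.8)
Throughout write $\delta := \delta_{m,2}$. The plan is to combine the explicit description $\mathcal{N}_{n-1-\delta}=\{x^\Lambda\partial_k\in\mathcal{B}\mid \wt(\Lambda)\le k\}$ from Remark~\ref{rem:N(n-2)} with \eqref{eq:normalizer_comm}, which together identify the set in the statement with the collection of $x^\Lambda\partial_k\in\mathcal{B}$ satisfying $\wt(\Lambda)>k$ and normalizing $\mathcal{N}_{n-1-\delta}$. The crucial preliminary is a weight reformulation of the normalizing condition: by Lemma~\ref{lem:weight_comm}, if $[x^\Lambda\partial_k,x^\Theta\partial_h]=c\,x^\Gamma\partial_u\ne 0$ with $c\in\Z_m$, then $u=\max(k,h)$ and $\wt(\Gamma)=\wt(\Lambda)+\wt(\Theta)-\min(k,h)$, and since the largest part of $\Gamma$ is at most $\max(k,h)-1<u$ the element $x^\Gamma\partial_u$ automatically lies in $\mathcal{B}$, so this bracket lies in $\Z_m\mathcal{N}_{n-1-\delta}$ exactly when $\wt(\Lambda)+\wt(\Theta)\le k+h$. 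Hence $x^\Lambda\partial_k$ normalizes $\mathcal{N}_{n-1-\delta}$ if and only if $\wt(\Lambda)-k\le h-\wt(\Theta)$ holds for every $x^\Theta\partial_h\in\mathcal{B}$ with $\wt(\Theta)\le h$ whose bracket with $x^\Lambda\partial_k$ is nonzero; I would use this criterion throughout, probing with suitable $x^\Theta\partial_h$.

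\emph{Forward inclusion.} Fix $x^\Lambda\partial_k$ with $\wt(\Lambda)>k$ that normalizes $\mathcal{N}_{n-1-\delta}$. I would first show $\Lambda$ is unrefinable in $\Part_m$: a refinement provides a part $j$ with $\lambda_j\ge 1$ and $j=\sum_i a_ij_i$, where $j_1<\dots<j_\ell<j$ and $a_i\le m-1-\lambda_{j_i}$; putting $\theta_{j_i}=a_i$ one has $x^\Theta\partial_j\in\mathcal{N}_{n-1-\delta}$ (since $\wt(\Theta)=j$) and, by \eqref{eq:commutator_def}, $[x^\Lambda\partial_k,x^\Theta\partial_j]=\lambda_j\,x^{D_j(\Lambda)+\Theta}\partial_k$, which is nonzero because the bounds $\lambda_{j_i}+a_i\le m-1$ are exactly what keeps the monomial nonzero in $\DP_m(n)$; its weight is $\wt(\Lambda)>k$, contradicting the criterion. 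Next I would prove the first excludant condition $k+e_1>n$, where $e_1$ is the least index with $\lambda_i\le m-2$: if $k+e_1\le n$ and $e_1\ne k$, then $\fil_{k,e_1}=x_kx_{e_1}\partial_{k+e_1}\in\mathcal{N}_{n-1-\delta}$ and $[x^\Lambda\partial_k,\fil_{k,e_1}]=-x^\Lambda x_{e_1}\partial_{k+e_1}$ is nonzero (as $\lambda_{e_1}\le m-2$) with weight $\wt(\Lambda)+e_1>k+e_1$, again contradicting the criterion; the boundary case $e_1=k$, which forces $\lambda_i=m-1$ for all $i<k$ and $\Lambda$ to have largest part $<k$, is settled directly via $\fil_{k,k}$ when $m>2$ (using $2\ne 0$ in $\Z_m$) and via the bracket with $\partial_1$ when $m=2$. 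Finally, to obtain $\wt(\Lambda)=k+1$, let $i_0$ be the least part of $\Lambda$; from $[x^\Lambda\partial_k,\partial_{i_0}]=\lambda_{i_0}x^{D_{i_0}(\Lambda)}\partial_k\ne 0$ of weight $\wt(\Lambda)-i_0$ the criterion gives $\wt(\Lambda)-k\le i_0$. If $k<n$, the first excludant condition forces $\lambda_1=m-1$, hence $i_0=1$ and $\wt(\Lambda)\le k+1$. If $k=n$, unrefinability forces $i_0\le 2$ (a part $i_0\ge 3$, or $i_0=2$ together with $m>2$, could be refined as $(i_0-1)+1$, respectively $1+1$), and if $i_0=2$, $m=2$ and $\wt(\Lambda)=n+2$ then the probe $x_1\partial_2\in\mathcal{U}$ gives $[x^\Lambda\partial_n,x_1\partial_2]=\lambda_2\,x_1x^{D_2(\Lambda)}\partial_n$ of weight $\wt(\Lambda)-1=n+1>n$, a contradiction; so $\wt(\Lambda)=k+1$ in all cases.

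\emph{Reverse inclusion.} Let $x^\Lambda\partial_k\in\mathcal{B}$ with $\Lambda$ unrefinable in $\Part_m$, $\wt(\Lambda)=k+1$ and $k+e_1>n$. By the criterion it is enough to show that whenever $[x^\Lambda\partial_k,x^\Theta\partial_h]\ne 0$ with $\wt(\Theta)\le h$ one in fact has $\wt(\Theta)\le h-1$, i.e.\ that no such $\Theta$ satisfies $\wt(\Theta)=h$. If $h<k$, such a $\Theta$ together with the part $h$ of $\Lambda$ (one has $\lambda_h\ge 1$ by nonvanishing) encodes a refinement of $\Lambda$, the multiplicity bounds being again the condition that $x^{D_h(\Lambda)}x^\Theta$ is nonzero in $\DP_m(n)$, contradicting unrefinability. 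If $h>k$, nonvanishing forces $\theta_k\ge 1$ and $\lambda_i+\theta_i\le m-1$ for all $i\ne k$; since $e_1\le k$ always (because $\lambda_k=0$), $k+e_1>n$ gives $2k>n$, whence $\lambda_i=m-1$ for every $i\le n-k$ (these $i$ are all different from $k$), so every part of $\Theta$ exceeds $n-k$; then the parts of $\Theta$ other than its copies of $k$ would sum to $h-k\theta_k\le n-k$, which is impossible unless there are none, so $\Theta$ consists only of copies of $k$ — but $\theta_k\ge 2$ forces $h=k\theta_k\ge 2k>n$, while $\theta_k=1$ makes $\Theta$ the single part $k$, which cannot have largest part $<h=k$. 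Hence no offending $\Theta$ exists, $x^\Lambda\partial_k$ normalizes $\mathcal{N}_{n-1-\delta}$, and since $\wt(\Lambda)=k+1>k$ it belongs to $\mathcal{N}_{n-\delta}\setminus\mathcal{N}_{n-1-\delta}$.

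\emph{Expected main obstacle.} The delicate step is $\wt(\Lambda)=k+1$ in the forward direction: the inequality $\wt(\Lambda)-k\le i_0$ produced by the bracket with $\partial_{i_0}$ is only as strong as the smallest part of $\Lambda$, so one genuinely needs the first excludant condition to force $1\in\supp(\Lambda)$ in the generic regime $k<n$, plus a blend of unrefinability and an ad hoc ``partial filling'' probe to settle the boundary $k=n$. Most of the remaining bookkeeping consists in handling the exceptional configurations $e_1=k$ and $k=n$, and in checking that the probes $\fil_{k,e_1}$ and $\fil_{k,k}$ invoked above really do lie in $\mathcal{N}_{n-1-\delta}$ — i.e.\ that their index does not exceed $n$.
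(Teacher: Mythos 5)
Your proof is correct and follows essentially the same route as the paper's: both reduce idealizing $\mathcal{N}_{n-1-\delta_{m,2}}$ to the weight inequality of Lemma~\ref{lem:weight_comm}, identify refinability of $\Lambda$ with a non-vanishing bracket against some $x^\Theta\partial_h$ with $h=\wt(\Theta)<k$, and use the filler elements $\fil_{e_1,k}$ to force the first excludant condition. The only notable difference is that the paper obtains $\wt(\Lambda)=k+1$ at the outset in one stroke by probing with $x_{h-1}\partial_h$ (where $h$ is the least part of $\Lambda$), whose weight is $h-1$, whereas you derive it last from the already-established unrefinability and excludant condition via the probe $\partial_{i_0}$ together with an extra $x_1\partial_2$ probe for the boundary case $k=n$ — more roundabout, but equally valid.
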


\begin{proof}
	We prove the claim  assuming \(m>2\). The proof of the case \(m=2\) is nearly identical, and also unnecessary, by virtue of the correspondence shown in Sec.~\ref{sec:m2}.
	
  Let
  \(x^\Lambda\partial_k\in         \mathcal{N}_{n}         \setminus
  \mathcal{N}_{n-1}\) and let \(e\) be the minimal excludant of \(\Lambda\).    By    Remark~\ref{rem:N(n-2)}, since  \(x^\Lambda\partial_k\not\in \mathcal{N}_{n-1}\),   we    have
  \(\wt(\Lambda)\ge  k+1\). Let  \(h=\min\Set{j\mid \lambda_j\ne  0}\)
  and let \[ \mathcal{N}_0\ni x^\Gamma\partial_h= \begin{cases}
      \partial_1 & \text{if \(h=1\)} \\
      x_{h-1}\partial_h & \text{if \(h>1\)}
    \end{cases}
  \]
  Since
  \(  \mathcal{N}_{n-1}\ni  [x^\Gamma\partial_h,  x^\Lambda\partial_k]=
  x^\Theta\partial_k\ne        0\)        it       follows        that
  \(\wt(\Theta)=\wt(\Lambda)-1\le k\). Hence \(\wt(\Lambda)= k+1\).
	
  Let \(\Xi\) be any partition of weight \(k+1\),  by   Lemma~\ref{lem:weight_comm} 
  \([x^\Xi\partial_k,\mathcal{N}_{n-2}]\subseteq
  \mathcal{N}_{n-1}\), again by   Lemma~\ref{lem:weight_comm}  and
  Remark~\ref{rem:N(n-2)}       it       follows       that       
  \(x^\Xi\partial_k\in         \mathcal{N}_{n}         \setminus
  \mathcal{N}_{n-1}\)          if         and          only         if
  \([x^\Xi\partial_k,\mathcal{W}_{n-1}]=0\).        Let       then
  \(x^\Sigma\partial_h\in      \mathcal{W}_{n-1}\),       so      that
  \(\wt(\Sigma)=h\).                    The                  condition
  \([x^\Sigma\partial_h,     x^\Lambda\partial_k]=0\)      for     all
  \(x^\Sigma\partial_h\in  \mathcal{W}_{n-1}\setminus \mathcal{U}\)  with   \(h\le  k\)  is
  equivalent  to  \(x^\Lambda\partial_k\)  being unrefinable.   So  we
  assume                          \(h>k\)                          and
  \(x^\Theta\partial_h= [x^\Sigma\partial_h,   x^\Lambda\partial_k]\ne
  0\). In particular \(\wt(\Sigma) \ge e+k\), since \(\sigma_k\ge 1\) and
 since \(\Sigma\)  can  have  non-zero   components  \(\sigma_i\)  only  if
  \(i\ne    k\)   is    an    excludant    of   \(\Lambda\).     Hence
  \(n\ge  h=  1+\wt(\Sigma)  \ge e+k\)  yielding  \(k\le  n-e\).
  Conversely        if        \(0<       k\le        n-e\)        then
  \(\fil_{e,k}\in                \mathcal{N}_{n-1}\)               and
  \([x^\Lambda\partial_k,\fil_{e,k}]=x_ex^\Lambda\partial_{e+k}\ne
  0\).       Hence       if      \(\wt(\Lambda)=       k+1\)      then
  \([x^\Lambda\partial_k,\mathcal{W}_{n-1}\setminus \mathcal{U}]=0\)   if   and   only   if
  \(n-e < k \le n\) and \(x^\Lambda\partial_k\) is unrefinable in \(\Part_{m}\).
\end{proof}

\subsection{One more idealizer}

In  this  last section we set again $m=2$ and we aim at the characterization of the \(n\)-th term  of the idealizer chain defined in Eq.~\eqref{def_chain}. By virtue of the results of Sec.~\ref{sub:corresp}, the characterization automatically extends to the $n$-th normalizer in $\Sym(2^n)$ of Eq.~\eqref{eq:chain}. 
The next contributions are rather technical and will really show the cost, in terms of combinatorial complexity, of trying to go beyond the `natural' limit of the \((n-1)\)-th idealizer/normalizer. 
 \medskip

Let 
\(x^\Lambda\partial_{k}\in                   \mathfrak{N}_{n}\setminus
\mathfrak{N}_{n-1}\) and let \(e_1  <  \dots <  e_s\)  be the excludants  of
\(\Lambda\).   We start by giving some necessary conditions that \(x^\Lambda\partial_{k}\) has to satisfy since it belongs to \(\mathfrak{N}_{n}\setminus\mathfrak{N}_{n-1}\).

 By Theorem~\ref{thm:homogeneous_normalizers}, we have   \(\wt(\Lambda)\ge k+1\).
Suppose        first       that        \(\wt(\Lambda)=k+1\).        By
Theorem~\ref{prop:unrefinable} either \(\Lambda\)
is refinable or \(\Lambda\) is unrefinable and \(k \le n-e_1 \).
If  \(\Lambda\)  is  refinable,  then there exists a  partition \(\Gamma\) with
\(h=\wt(\Gamma)<k\),                     such                     that
\(\mathfrak{N}_{n-1}\ni     x^\Theta\partial_{k}=[x^\Lambda\partial_k,
x^\Gamma  \partial_h]\ne   0\);  the  partition  \(\Theta\)   is  then
unrefinable and the minimal excludant \(e\) of \(\Theta\) is such that
\(k>n-e\).  Suppose that \(\Lambda\)  satisfies the \(j\)-th excludant
condition with \(j\ge 1\).
Since  there  exists  an unrefinable  \(2\)-refinement  \(\Theta\)  of
\(\Lambda\)  obtained   replacing  a   part  \(\lambda_u\)   with  two
excludants  \(e_s\)   and  \(e_t\),  then we have  \(j\le   3\).   Moreover,  if
\(j\ge       2\),       then       the        commutator       element
\[x^\Xi \partial_{e_1+k}=   [x^\Lambda\partial_{k}, \fil_{e_1,k}]  =
x_{e_1}x^\Lambda\partial_{e_1+k}   \in   \mathfrak{N}_{n-1},\]   therefore
 \(x^\Lambda\partial_k\) satisfies the weak second excludant
condition  and the  partition  \(\Xi\) obtained  by \(\Lambda\)  by
filling  its  minimum  excludant  is an  unrefinable  partition.  This
implies that any  refinement of  \(\Lambda\) has  \(1\) in the
\(e_1\)-th  component. In  the more specific case  \(j=3\), the  same argument  applies
replacing  \(e_1\)  with  \(e_2\).   Thus  every  refinement
\(\Theta\)  of  \(\Lambda\) has  each of  the  \(e_1\)-th and  \(e_2\)-th
component set to \(1\). From Proposition~\ref{prop:refinement_length}, we have 
that if  \(j=3\), then  \(\lambda_{e_1+e_2}=1\) and  \(\Theta\) is  obtained
from \(\Lambda\) inserting \(0\) in the \((e_1+e_2)\)-th component and
\(1\)  in  the \(e_1\)-th  and  \(e_2\)-th  component of  \(\Lambda\). 
 A  similar  argument   shows  that  if  \(\Lambda\)  is
unrefinable,  then  it  has  to   satisfy  the  second  weak  excludant
condition.
Let us summarize the previous conditions as follows:
\begin{definition}\label{def:one_step_mx}
	The element \(x^\Lambda\partial_k\) satisfies the \emph{\(1\)-step excludant condition} if \(wt(\Lambda)=k+1\) and \(\Lambda\) satisfies  one of the following:
	\begin{enumerate}[(a)]
	\item \(\Lambda\)  is \(1\)-step refinable and it satisfies the first excludant
	condition, \label{item:zero}
	\item \label{item:one}  \(\Lambda\)  is \(1\)-step refinable and it satisfies the second excludant
	condition and every refinement \(\Theta\) is such that \(\theta_{e_1}=1\), 
	\item  \label{item:two}  \(\Lambda\) is \(1\)-step refinable and satisfies both the third excludant
	condition and the second  weak excludant condition, \(\lambda_{e_1+e_2}=1\),
	and the only refinement \(\Theta\) of \(\Lambda\) is such that \(x^\Theta=x_{e_1}x_{e_2}x^\Lambda /x_{e_1+e_2} \),
	\item \label{item:three} \(\Lambda\)  is
	unrefinable  and  it  has  to   satisfy  the  second  weak  excludant
	condition.
\end{enumerate} 
\end{definition}

We are now  left with  the case   \(\wt(\Lambda)\ge  k+2\).  If
\(\lambda_1=1\),                                                  then
\(   x^\Theta\partial_{k}    =   [x^\Lambda\partial_{k},\partial_1]\in
\mathfrak{N}_{n-1}\),                      and so
\(k+1     \le     \sum_{i\ge    2}i\lambda_i\le     k+1\)     implies
\(\wt(\Lambda)=k+2\).  The minimal excludant of \(\Theta \) is
\(1\), which implies \(k>n-1\), i.e.\ \(k=n\). Moreover, \( \Theta \) has to
be   unrefinable  and so if   \(\lambda_i=\theta_i=0\)  for   some
\(i\ge  2\),  then  \(\lambda_{i+1}=\theta_{i+1}=0\)  as  well.   This
implies that there exists an index \(t\) such that \(\lambda_i=1\) for
\(1\leq i \leq t\) and \(\lambda_i=0\) for \(i>t\). Thus \(\Lambda\) is a
triangular partition.
Suppose now that \(\lambda_1=0\). Let \(h\)  be  an index such that
\(\lambda_{h-1}=0\) and \(\lambda_h=1\). We want to show that \(h=2\).
If       \(h>2\),      then       the        commutator       element
\(x^\Theta\partial_{k}=[x^\Lambda\partial_{k},        \fil_{1,h-1}]\in
\frak{N}_{n-1}\)  where \(\wt(\Theta)  =k+2\), a  contradiction.  This
implies that there exists an index \(t>2\) such that \(\lambda_i=0\) for
\(i>t\).   We will then say that  \(\Lambda\)  is   a  \emph{weak-triangular  partition}.   In
particular   \(\lambda_2=1\)    and   so   the    commutator   element
\([x^\Lambda\partial_{k},   x_1\partial_{2}]=  x^\Theta\partial_{k}\in
\frak{N}_{n-1}\), where the minimum  excludant of \(\Theta\) is \(2\).
Thus \(n-2< k  \le n\), i.e.\ \(k\) is either  \(n\) or \(n-1\).  Note
that     the case    \(k=n-1\)     cannot occur    since     then
\([x^\Lambda\partial_{k},  \fil_{1,k}]=  x^\Theta\partial_{n}\ne  0\in
\frak{N}_{n-1}\)       which       yields      the       contradiction
\(\wt(\Theta)=k+3=n+2>n+1\).\\

We conclude summarizing below what previously discussed and showing that the mentioned conditions
are also sufficient, with some sporadic exceptions in the case $n=8$. Due to the 
intricate combinatorial nature of the problem, the long proof of the result is rather tedious as it is articulated in several cases
and sub-cases.

\begin{theorem}\label{thm:nth}
  With the sole exclusion of the cases \(n=8\) and
  \begin{align}
  	x^\Lambda\partial_{k}  &= x_2x_7\partial_8, \label{excl1} \\
  	x^\Lambda\partial_{k}  &= x_4x_5\partial_8, \label{excl2}\\
  	x^\Lambda\partial_{k}  &= x_2x_4\partial_5, \label{excl3}
  \end{align}
the     element     \(x^\Lambda\partial_{k}    \)     belongs     to
  \(\mathfrak{N}_{n}\setminus \mathfrak{N}_{n-1}\) if  and only if 
  one
  of the following conditions is satisfied:
  \begin{enumerate}
  	\item \label{item1thm} \(\wt(\Lambda)=k+1\)  and \(x^\Lambda\partial_k\) satisfies the \emph{\(1\)-step excludant condition},
  \item \label{item2thm} \(\wt(\Lambda)=k+2\),  \(k=n\) and one of the following holds,  
  \begin{enumerate}[(i)]
  	\item \label{item2ithm} \(n+2\) is  the
  	\(t\)-th triangular number  and \(x^\Lambda=x_1\cdots x_t\), i.e.\
  	\(\Lambda\) is the \(t\)-th triangular partition,
  	\item \label{item2iithm} \(n+3\) is  the
  	\(t\)-th triangular number  and \(x^\Lambda=x_2\cdots x_t\), i.e.\
  	\(\Lambda\) is the \(t\)-th weak-triangular partition.
  \end{enumerate}
  \end{enumerate}
\end{theorem}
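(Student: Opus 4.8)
The theorem is an equivalence, and essentially all the content sits in the sufficiency direction. For necessity nothing new is needed beyond the discussion preceding the statement: for a basis element \(x^\Lambda\partial_k\in\mathfrak{N}_{n}\setminus\mathfrak{N}_{n-1}\) one gets \(\wt(\Lambda)\ge k+1\) from Theorem~\ref{thm:homogeneous_normalizers}; the case \(\wt(\Lambda)=k+1\) is pinned down to the \(1\)-step excludant condition of Definition~\ref{def:one_step_mx} by testing against the filler elements \(\fil_{e_i,k}\) and the unrefinable elements of \(\mathcal{N}_{n-1}\), using Theorem~\ref{prop:unrefinable} and Proposition~\ref{prop:refinement_length}; and the case \(\wt(\Lambda)\ge k+2\) is forced down to \(\wt(\Lambda)=k+2\), \(k=n\), with \(\Lambda\) triangular or weak-triangular, by testing against \(\partial_1\), \(x_1\partial_2\) and filler elements \(\fil_{1,j}\). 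So the necessity direction just collects these into \eqref{item1thm} and \eqref{item2thm}; the small-\(n\) caveat is needed only because several of the excludant inequalities become vacuous when too few admissible parts are available.

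For sufficiency I would fix \(x^\Lambda\partial_k\) satisfying \eqref{item1thm} or \eqref{item2thm} and, by Theorem~\ref{thm:homogeneous_normalizers}, check that \(\bigl[x^\Lambda\partial_k,x^\Theta\partial_h\bigr]\in\Z_2\mathcal{N}_{n-1}\) for every \(x^\Theta\partial_h\in\mathcal{N}_{n-1}\). By \eqref{eq:commutator_def} one may assume the commutator is nonzero, and then over \(\Z_2\) it is a single basis element \(x^\Gamma\partial_u\) with, by Lemma~\ref{lem:weight_comm}, \(u=\max(h,k)\) and \(\wt(\Gamma)=\wt(\Lambda)+\wt(\Theta)-\min(h,k)\); nonvanishing already forces the relevant supports to be disjoint, so \(\Gamma\) is obtained from \(\Lambda\) by deleting and inserting parts. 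The goal is to show \(x^\Gamma\partial_u\in\mathcal{N}_{n-1}\), i.e.\ \(\wt(\Gamma)\le u\), or \(\wt(\Gamma)=u+1\) with \(\Gamma\) unrefinable satisfying the first excludant condition (Theorem~\ref{prop:unrefinable}). I would stratify the test elements into \(x^\Theta\partial_h\in\mathcal{U}\); \(x^\Theta\partial_h\in\mathcal{N}_{n-2}\setminus\mathcal{U}\) (so \(\wt(\Theta)\le h\)); and \(x^\Theta\partial_h\in\mathcal{N}_{n-1}\setminus\mathcal{N}_{n-2}\) (so \(\wt(\Theta)=h+1\), \(\Theta\) unrefinable, first excludant condition), and within each by the sign of \(h-k\). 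The elements of \(\mathcal{U}\) are handled as in Lemma~\ref{lem:N0}, and most combinations either lower the weight below \(u\) or visibly produce an element of \(\mathcal{N}_{n-1}\).

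The crux is that the only mechanism producing \(\wt(\Gamma)=u+1\) from \(\wt(\Lambda)=k+1\) is the filling of one excludant of \(\Lambda\) (through an element of \(\mathcal{N}_{n-1}\setminus\mathcal{N}_{n-2}\) with \(h>k\), or through a filler), and from \(\wt(\Lambda)=k+2\) the deletion of the part \(1\); the four sub-cases of the \(1\)-step excludant condition are exactly those in which such a commutator either vanishes or yields a \(\Gamma\) that is unrefinable with minimum excludant large enough that \(x^\Gamma\partial_u\) still lies in \(\mathcal{N}_{n-1}\setminus\mathcal{N}_{n-2}\). Verifying this uses Proposition~\ref{prop:refinement_length} to control how any \(2\)-refinement of \(\Gamma\) would have to interact with \(e_1\) and \(e_2\), together with the hypotheses that every refinement of \(\Lambda\) carries a \(1\) in the \(e_1\)-th (and, in the third sub-case, the \(e_2\)-th) component. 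For \eqref{item2thm} the verification is shorter: with \(\Lambda\) (weak-)triangular and \(k=n\), a nonzero commutator with an element of \(\mathcal{N}_{n-1}\) either drops the weight to at most \(n\) or is forced to vanish because the triangular shape admits no slot for a new part without creating a repeated one, and the triangular-number conditions are precisely what renders \(\wt(\Lambda)=n+2\), respectively \(n+3\), compatible with the shape.

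I expect the main obstacle to be the bookkeeping in the top stratum (\(x^\Theta\partial_h\in\mathcal{N}_{n-1}\setminus\mathcal{N}_{n-2}\), \(h>k\)), where one commutes the candidate against the already delicate unrefinable elements of Theorem~\ref{prop:unrefinable} and must track the weight, the derivative index and the refinability/excludant structure of the output all at once. This is also where the sporadic exceptions surface: for \(n=8\) the third excludant condition and the second weak excludant condition can be met coincidentally by the three elements \eqref{excl1}--\eqref{excl3}, which thus satisfy the right-hand side of the theorem yet fail to normalise \(\mathfrak{N}_{7}\); one isolates them by direct inspection and verifies that for \(n\ne 8\) enough small parts are available for the relevant excludant inequalities to behave monotonically, so that no further exceptions arise. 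Clearing these finitely many configurations, and ordering the remaining cases so that no choice of \(x^\Theta\partial_h\) is missed, is the tedious but routine heart of the argument.
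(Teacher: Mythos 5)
Your architecture coincides with the paper's: necessity is taken from the discussion preceding the statement, and sufficiency is proved by testing \(x^\Lambda\partial_k\) against the elements of \(\mathcal{N}_{n-1}\), split into those with \(\wt(\Theta)\le h\) and those in the top stratum \(\mathcal{N}_{n-1}\setminus\mathcal{N}_{n-2}\). One bookkeeping error first: by Lemma~\ref{lem:weight_comm}, a nonzero commutator of \(x^\Lambda\partial_k\) (with \(\wt(\Lambda)\ge k+1\)) with a top-stratum element \(x^\Theta\partial_h\) (with \(\wt(\Theta)=h+1\)) has weight at least \(\max(k,h)+2=u+2\), which exceeds the maximal weight \(u+1\) attained in \(\mathcal{N}_{n-1}\); so in that stratum the commutator must vanish identically, and there is no option of the output ``still lying in \(\mathcal{N}_{n-1}\setminus\mathcal{N}_{n-2}\).'' In particular your parenthetical claim that an element of \(\mathcal{N}_{n-1}\setminus\mathcal{N}_{n-2}\) with \(h>k\) can produce \(\wt(\Gamma)=u+1\) is false: the weight-\((u+1)\) outputs arise only from test elements with \(\wt(\Theta)\le h\) (for instance the fillers), and that is the comparatively easy half of the verification.

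The genuine gap is that the part you defer as ``tedious but routine bookkeeping'' is, in substance, the whole proof. Establishing that every candidate satisfying \ref{item1thm} or \ref{item2thm} commutes with every unrefinable \(x^\Gamma\partial_u\) of weight \(u+1\) satisfying the first excludant condition requires a structural classification of such \(\Gamma\): the paper runs four cases on \((\gamma_1,\gamma_2,\gamma_3)\), uses unrefinability to force near-triangular shapes on \(\Gamma\), pins down \(u\) and hence \(n\) within each branch via the excludant inequalities, and only then intersects with the constraints on \(\Lambda\). The three sporadic elements and the bound \(n=8\) emerge from that classification; they do not follow from a monotonicity statement about ``enough small parts being available,'' for which you offer no argument. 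Without carrying out this classification you cannot certify that the list of exceptions is complete, and completeness of that list is precisely what the theorem asserts.
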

	
\begin{proof}
We            already            showed           that            if
  \(x^\Lambda\partial_{k}         \in        \mathfrak{N}_{n}\setminus
  \mathfrak{N}_{n-1}\)  then one  of  the conditions  \ref{item1thm}--\ref{item2thm} has to  be
  satisfied.  \smallskip
  
  We now  show that these  conditions are also sufficient.  The proof is divided in two steps. We prove, in \textsc{Step~1},                          
   that                           if
  \(x^\Gamma\partial_{u}\in                \mathfrak{N}_{n-1}\setminus
  \mathfrak{N}_{n-2}\),                                           then
  \( [x^\Lambda\partial_{k}, x^\Gamma\partial_{u}]=0\)  for every base
  element \(x^\Lambda\partial_{k}\) satisfying \ref{item1thm} or \ref{item2thm}. Later, by Remark~\ref{rem:N(n-2)}, it remains to prove that   if
\(\wt(x^\Gamma\partial_{u}) \le u\),                                            then
\( [x^\Lambda\partial_{k}, x^\Gamma\partial_{u}]  \in \mathfrak{N}_{n-1} \)  for every base
element \(x^\Lambda\partial_{k}\) satisfying \ref{item1thm} or \ref{item2thm}. This is accomplished in \textsc{Step~2}.

\medskip

\mybox{\textsc{Step~1.}}

  We  consider several  cases depending on  the form of the unrefinable
  partition \(\Gamma\). We note that, since \(\Gamma\) is not the zero
  partition, at  least  one  of \(\gamma_1\)  and
  \(\gamma_2\) has  to be not  zero or \(\Gamma\) would  be refinable.
  In      the      following       we      shall      assume      that
  \( [x^\Lambda\partial_{k}, x^\Gamma\partial_{u}]\ne 0\)  without further  mention, in  particular, for  all \(i\),  the
  components \(\lambda_i\) and \(\gamma_i\) will  be not both equal to
  \(1\).
  
  \textsc{Case 1.}   We start considering the  case \(\gamma_1=0\) and
  \(\gamma_2=1\).
  
 \noindent  If \(\gamma_{i-1}=0\)  and \(\gamma_i=1\)  for some \(i\ge  0\), then
  \(\Gamma\)   would  admit   a  refinement   \(\Theta\)  by   setting
  \(x^\Theta\partial_u=[x^\Gamma\partial_u,x_1x_{i-1}\partial_i]\).
  This is not possible as \(\Gamma\) is unrefinable and so \(\Gamma\)
  is    weak-triangular, i.e.\   \(x^\Gamma=    x_2\cdots   x_s\).     Since
  \(x^\Gamma\partial_{u}\in                \mathfrak{N}_{n-1}\setminus
  \mathfrak{N}_{n-2}\),  then  \(n-e<u\le  n\), where  \(e=1\)  is  the
  minimum  excludant of  \(\Gamma\). Thus  \(u=n\) and we  necessarily have \(k<n\).  Since the above commutator
  is not    trivial, then  \(2\le k \le s\).    As   a   consequence  \(\lambda_1=1\)   and
  \(\lambda_i=0\)         for        \(i\ge         2\),        giving
  \(x^\Lambda\partial_{k}=x_1\partial_{k}\in      \mathcal{U}\setminus
  \mathcal{T}\), a contradiction.
  
  \textsc{Case  2.}   We next  consider  the  case \(\gamma_1=1\)  and
  \(\gamma_2=\gamma_3=0\).
  
  \noindent Note  that  \(\gamma_4=1\)  otherwise the unrefinability  of  \(\Gamma\)
  would   give   \(\gamma_i=0\)   for    all   \(i\ge   2\).      If  \(\gamma_i=0\)   for  all  \(i  >  4\),  then
  \(u=\wt(\Gamma)   -1  =4\)   contradicting  \(\gamma_4=1\).    Hence
  \(\gamma_t\ne  0\)  for  some  \(t >  4\).   Note  that  necessarily
  \(\gamma_i=0\) if \(i>3\) and \(i\equiv  3 \bmod 2\), i.e.\ if \(i\)
  is  odd.   Thus \(t=2h\)  for  some  \(h\ge  3\).  If  \(h>3\),  then
  \(2h=(2h-3)+3\) and the fact that \(\gamma_3=\gamma_{2h-3}=0\) shows
  that                          the                         commutator
  \(x^\Theta\partial_{u}= [x^\Gamma\partial_u,x_3x_{t-3}\partial_t]\)  provides  a
  proper refinement \(\Theta\) of  \(\Gamma\), a contradiction.  Hence
  \(h=3\)      is      the      only     possible      choice     and so
  \(x^\Gamma\partial_{u}=x_1x_4x_6\partial_{10}\).                From
  \(n-e<u\le  n\),   where  \(e=2\)   is  the  minimum   excludant  of
  \(\Gamma\),   we   have   \(n=10\)  or   \(n=11\).    Suppose  first that
  \(k<u=10\le  n\). Then the  possible values  of \(k\)  are \(4\) or \(6\),  
  otherwise   \(  [x^\Lambda\partial_{k},   x^\Gamma\partial_{u}]=0\).
  Since either  \(\wt(\Lambda)=k+1\) or \(\wt(\Lambda)=k+2\),  the only
  possibilities         for        \(x^\Lambda\partial_k\)         are
  \(x^\Lambda\partial_k=x_2x_5\partial_6\),
  \(x^\Lambda\partial_k=x_3x_5\partial_6\)                         or
  \(x^\Lambda\partial_k=x_2x_3\partial_4\).   None of  these satisfies
  \ref{item1thm} or \ref{item2thm}, since we are assuming \(n\ge 10\).
  The other possibility  is that \(k>u=10\). As  a consequence we have \(k=n=11\)
  and \(\lambda_{10}=1\).  Since \(\wt(\Lambda)\le k+2 \le 13\), then \(\Lambda\) cannot be weak-triangular.
  Hence     \(\wt(\Lambda)=k+1\)    and     we    consequently have     
  \(x^\Lambda\partial_k=x_2x_{10}\partial_{11}\),   which does   not
  satisfy any of the conditions of Definition~\ref{def:one_step_mx}, as \(\Lambda\) is \(2\)-step refinable.
  
  \textsc{Case 3.}   We consider the case  \(\gamma_1=\gamma_3=1\) and
  \(\gamma_2=0\).
  
 \noindent The unrefinable partition \(\Gamma\) has then the form:
  \begin{equation}\label{eq:gamma}
    \Gamma = (1,0,\underbrace{1,\dots,1}_{l}, \underbrace{0,1,\dots,0,1}_{m},0,\dots). 
  \end{equation}
  In other words, the sequence  starts with  \(1,0\) followed  by \(l\ge 1\)  repetitions of
  \(1\) and by  \(m\ge 0\) repetitions of the block \(0,1\),   and then it is
  definitely \(0\). Correspondingly, \(\Lambda\) has the form
  \[
    \Lambda             =             (0,*,\underbrace{0,\dots,0}_{l},
    \underbrace{*,0,\dots,*,0}_{m},*,\dots),
  \]
  where  the asterisks  are  unspecified  digits  in  \(\Set{0,1}\).   Since
  \(\Lambda\notin \mathcal{U}\)  then it  has at least  two components
  equal to \(1\).  In particular \(\lambda_i=1\) for some  minimal \(i> l+2\). 
  We proceed by considering the possible values of \(i\).  
  
  Suppose first that \(i\ge 8\), which gives \(\lambda_4=0\).  In  this case 
  \(\lambda_{i-3}=0\) and          the          commutator
  \(x^\Theta\partial_k=[x^\Lambda\partial_k,x_3x_{i-3}\partial_i]   \)
  provides a refinement \(\Theta\) of \(\Lambda\) which is unrefinable
  since            the           commutator            is           in
  \(\mathfrak{N}_{n-1}\setminus       \mathfrak{N}_{n-2}\).       Moreover
  \(\theta_1=0\), and so  \(\Theta\)  has to be  weak-triangular. This
  implies  that   \(\lambda_4=\theta_4=1\), a contradiction. Therefore   \(i<8\).
 
   If \(i=7\), then  \(\lambda_4\ne 0\) otherwise
  the                                                       commutator
  \(x^\Theta\partial_k=[x^\Lambda\partial_k,x_3x_4\partial_7]      \in
  \mathfrak{N}_{n-1}\setminus\mathfrak{N}_{n-2}\)   would  provide   a
  weakly     triangular      partition     \(\Theta\). The fact \(i=7\) also implies     \(\lambda_5=0\).
  Hence
  \(x^\Theta\partial_k=x_2x_3x_4\partial_{8}\),
  \(n=k=8\)                                             and
  \(x^\Lambda\partial_k=x_2x_7\partial_{8}\), which is the sporadic exception of Eq.~\eqref{excl1}.  
   
  Suppose that \(i=6\), then \(\gamma_6=0\).
  The unrefinability of \(\Gamma\) implies that  \(\gamma_{2h}=0\)  for  \(h\ge  0\)  with  the  only
  possible exception of  \(h=2\).  Let \(s\) be  largest possible such
  that     \[\gamma_1=\gamma_3=\dots=\gamma_{2s-1}=     1.\]     Then
  \(\gamma_j=0\)      for      all       \(j>2s-1\).      We      have
  \[\wt(\Gamma)=1+3+\dots   +  2s-1+4\gamma_4=s^2+4\gamma_4,\]  hence
  \(n-2<u=s^2-1+4\gamma_4 \le    n\).    Thus
  \(s^2-1+4\gamma_4\le    n    \le   s^2+4\gamma_4\).     Note    that
  \(\lambda_j=0\) for all odd  \(j\le 2s-1\).  Suppose \(\lambda_t=0\)
  for all \(t\ge 2s\). Since \(i=6\), then \(\lambda_4=0\) and so \(k+1=\wt(\Lambda)  \le 2+6+\dots+  2s-2=s^2-s-4\).
  From this we obtain \(k\le s^2-s-5\). By hypothesis \(x^\Lambda\partial_{k}\) satisfies Definition~\ref{def:one_step_mx}, and so \(k > n-e_1-e_2=n-4\ge s^2 -5\). This gives \(s^2-5 < s^2-s-5 \), a contradiction.
  Hence \(\lambda_t=1\) for
  some minimum  \(t\ge 2s\).   
  We consider first the case \(s>3\) and so \(t\ge 2s\ge 8\).
  Suppose that \(t-3\ge 2s\), then
  \(\lambda_3=\lambda_{t-3}=0\).  If \(t-3 <  2s\) and \(t\)
  is even,  then \(t-3>3\)  is odd  and less than  \(2s\), hence  we find
  again \(\lambda_3=\lambda_{t-3}=0\). We are left with the case \(t\)
  odd and \(3 < t-3  < 2s \), i.e.\  \(t=2s+1\). If \(\lambda_{2(s-1)}=0\),
  then again \(\lambda_3=\lambda_{t-3}=0\).  Otherwise 
  we can  refine \(\Gamma\)  two times by  replacing \(2s-2=(2s-3)+1\)
  and then  \(t=(2s-2)+3\) which is  impossible.  Summarizing, if  \(s\ge 4\)
  then \(t-3\ne 3\) and   \(\lambda_3=\lambda_{t-3}=0\). If  follows that
  \(\Lambda\)  can be  refined once  by replacing  \(t\) by  \(3\) and
  \(t-3\) obtaining a weak triangular  partition. This can happen only
  if \(t=8\) and
  \(x^\Lambda\partial_k=x_2x_4x_6x_8\partial_{19}\).
  If \(k<u\) then \(\gamma_{19}\ne 0\) so that  \(u>2s-1\ge 19\). In particular \(8 = t\ge 2s \ge 20\) a contradiction. Hence \(k>u\). In this case, since \(s\ge 4\), we have \(u> 2s-1\ge 7\) and in order to have \([x^\Lambda\partial_k,x^\Gamma\partial_u]\ne 0\) we have \(u=8\), \(\gamma_4=0\) and so \(x^\Gamma=x_1x_3x_5x_7\). We then have \([x^\Lambda\partial_k,x^\Gamma\partial_u]= x_1x_2x_3x_4x_5x_6x_7x_8\partial_{19}\) and \(20=k+1< 1+2+\dots+8=28\), again a contradiction.
  Thus we may  assume \(s\le 3\)
  which                                                        implies
  \(x^\Gamma\partial_u=x_1x_3x_4^{\gamma_4}x_5\partial_{8+4\lambda_4}\)
  and      \(n=8+4\gamma_4\)      or     \(n=9+4\gamma_4\).       Correspondingly,
  \(x^\Lambda\partial_k=x_2^{\lambda_2}x_4^{\lambda_4}x_6x_8^{\lambda_8}\partial_k\). Note that if \(\lambda_{8}=1\), then \(\lambda_2=\lambda_4=0\) otherwise \(k+1=\wt(\Lambda)> 14 > n\) which gives the contradiction \(k\ge n+1\), hence \(x^\Lambda=x_6x_8\) and \(\Lambda\) is \(2\)-step refinable. Thus \(\lambda_8=0\).  
  Since  \(k>6\),   we  have   that  \(x^\Gamma\partial_u\)   and
  \(x^\Lambda\partial_k\)  commute, a contradiction.
   
  We        now        suppose        that        \(i=5\). In this case \(\lambda_4=0\). Assuming  \(\lambda_2=0\) would give that \(\Lambda \) is \(2\)-step refinable.         Then
  \(x^\Lambda=x_2x_5\prod_{i\ge 6}x_i^{\lambda_{i}}\) and \(k=6+\sum_{i\ge 6}i\lambda_i\). We first note that \(l=2\) and \(m\le 3\) since \(\Gamma\) is unrefinable and \(5\) and \(7\) are excludants. Also \(n-1\le \wt(\Gamma)-1 = m^2+5m+7 \le n\) which in turn gives \[
  n=\begin{cases}
  	m^2+5m+7 \\
  	m^2+5m+8
  \end{cases}.
  \]
  Note that \(x^\Lambda\partial_k\) satisfies the second weak excludant condition, hence 
  \[
  m^2+5m+6 \le n-1 \le \wt(\Lambda) \le n+1 \le  m^2+5m+9.
  \]
  We proceed analyzing the possible values of \(m\). If \(m=0\) then \(6 \le \wt(\Lambda) \le 9\). The only possibility is that \(x^\Lambda\partial_{k}=x_2x_5\partial_{6}\). Then  \(\Lambda\) is refinable since \(5=1+4\) and its only refinement has to satisfy the first  excludant condition i.e.\   \(n-3 < 6\) and hence  \(n\le 8\). Noting that 
  \(x^\Gamma\partial_u=x_1x_3x_{6}\partial_{9}\) we also have  \(n\ge 9\), which
  is    a    contradiction. Let us now assume \(m=1\). In this case \(12 \le \wt(\Lambda) \le 15\). The only possibilities for \(x^\Lambda\partial_{k}\) are \(x_2x_5x_7\partial_{13}\) and \(x_2x_5x_8\partial_{14}\). The second one is \(2\)-step refinable and so it is not eligible. If \(x^\Lambda\partial_{k}=x_2x_5x_7\partial_{13}\), then \(x^\Gamma\partial_{u}=x_1x_3x_4x_6\partial_{13}\) and these two elements commute. Let us now consider the case \(m=2\).  We have \(20 \le \wt(\Lambda) \le 23\) giving the only possibility \(x^\Lambda\partial_{k}=x_2x_5x_7x_9\partial_{22}\) which is \(2\)-step refinable. Similarly when \(m=3\) we find  \(30 \le \wt(\Lambda) \le 33\) which in turn yields that the possible cases for \(x^\Lambda\partial_{k}\) are represented by \(2\)-step refinable partitions.
  
  Suppose that \(i=4\), then \(\gamma_4=0\).
  The unrefinability of \(\Gamma\) implies that  \(\gamma_{2h}=0\)  for  \(h\ge  0\).  Let \(s\ge 2\) be  largest possible such
  that     \(\gamma_1=\gamma_3=\dots=\gamma_{2s-1}=     1\).      Then
  \(\gamma_j=0\)      for      all       \(j>2s-1\).      We      have
  \(\wt(\Gamma)=1+3+\dots   +  2s-1=s^2\),   hence
  \(n-2<u=s^2-1\le    n\).    
  Thus
  \(s^2-1\le    n    \le   s^2\).     Note    that
  \(\lambda_j=0\) for all odd  \(j\le 2s-1\).  Suppose \(\lambda_t=0\)
  for all \(t\ge 2s\). Since \(i=4\) then   \(k+1=\wt(\Lambda)  \le 2+4+\dots+  2s-2=s^2-s\).
  From this we obtain  \(k\le s^2-s-1\). By hypothesis \(x^\Lambda\partial_{k}\) satisfies Definition~\ref{def:one_step_mx}, so that \(k > n-e_1-e_2=n-4\ge s^2 -5\). This gives \(s^2-5 < s^2-s-1 \) and so \(2\le s\le 3\).
  If \(s=2\), then \(k=3\) and so  \(\gamma_3\) has to be \(0\), contrary to the hypothesis. If \(s=3\), then \(x^\Gamma\partial_u=x_1x_3x_5\partial_8\) and therefore \(n=8\) or \(n=9\). Since \(x^\Lambda\partial_k\) satisfies the second weak excludant condition, then \(5\le n-3\le k\le n\le 9\), so we have    \(x^\Lambda=x_2^{\lambda_2}x_4x_6^{\lambda_6}\). Since \([x^\Lambda\partial_k,x^\Gamma\partial_u]\ne 0\), then \(k=5\) and so \(\lambda_2=1\), \(\lambda_6=0\) and necessarily \(n=8\). In this way we obtain the exceptional element of Eq.~\eqref{excl3}.
 We are left with the case \(\lambda_t=1\) for
  some minimum  \(t\ge 2s\).   
  We consider first the case \(s>3\) and so \(t\ge 2s\ge 8\).
  Suppose that \(t-3\ge 2s\), then
  \(\lambda_3=\lambda_{t-3}=0\).  If \(t-3 <  2s\) and \(t\)
  is even, then \(t-3>3\)  is odd  and less than  \(2s\), hence  we find
  again \(\lambda_3=\lambda_{t-3}=0\). We are left with the case \(t\)
  odd and \(3 < t-3  < 2s \), i.e.\  \(t=2s+1\). If \(\lambda_{2(s-1)}=0\),
  then again \(\lambda_3=\lambda_{t-3}=0\).  Otherwise 
  we can  refine \(\Gamma\)  two times by  replacing \(2s-2=(2s-3)+1\)
  and then  \(t=(2s-2)+3\) which is  impossible.  Summarizing, if  \(s\ge 4\),
  then \(t-3\ne 3\) and   \(\lambda_3=\lambda_{t-3}=0\). If  follows that
  \(\Lambda\)  can be  refined once  by replacing  \(t\) by  \(3\) and
  \(t-3\) obtaining a weak triangular  partition. This can happen only
  if \(t=8\) and
  \(x^\Lambda\partial_k=x_2x_4x_6x_8\partial_{19}\).
  If \(k<u\), then \(\gamma_{19}\ne 0\) so that  \(u>2s-1\ge 19\). In particular \(8 = t\ge 2s \ge 20\), a contradiction. Hence \(k>u\). In this case, since \(s\ge 4\), we have \(u> 2s-1\ge 7\) and in order to have \([x^\Lambda\partial_k,x^\Gamma\partial_u]\ne 0\) we have \(u=8\), \(\gamma_4=0\) and so \(x^\Gamma=x_1x_3x_5x_7\). Then we have \([x^\Lambda\partial_k,x^\Gamma\partial_u]= x_1x_2x_3x_4x_5x_6x_7x_8\partial_{19}\) and \(20=k+1< 1+2+\dots+8=28\), again a contradiction.
  Thus we may  assume \(s\le 3\)
  which                                                        implies
  \(x^\Gamma\partial_u=x_1x_3x_5\partial_{8}\)
  and      \(n=8\)      or     \(n=9\).       Also
  \(x^\Lambda=x_2^{\lambda_2}x_4x_6^{\lambda_6}x_8^{\lambda_8}\). Note that if \(\lambda_{8}=1\) then \(k+1=\wt(\Lambda)\ge 12\) which contradicts \(n\le 9\).
Thus \(\lambda_8=0\). We also have  \(\lambda_6=0\), otherwise \([x^\Lambda\partial_k,x^\Gamma\partial_u]=0\). We have again obtained the sporadic exception 
\(x^\Lambda\partial_k=x_2x_4\partial_5\) and \(n=8\).

  \textsc{Case   4.}     We   are    finally   left   with    the   case
  \(\gamma_1=\gamma_2=1\).    
  
  \noindent We have \(\lambda_1=\lambda_2=0\). 
 Suppose
  that  \(\lambda_i=0\)  and   \(\lambda_{i+2}=1\)  for  some  minimal
  \(i\ge                           3\),                           then
  \(x^\Theta\partial_k=[x^\Lambda\partial_k,x_2x_{i}\partial_{i+2}]\in
  \mathfrak{N}_{n-1}\setminus       \mathfrak{N}_{n-2}\).        Since
  \(\theta_1=0\), then \(\Theta\) has to be weak triangular. Hence
  \[ \Lambda=(0,0,\underbrace{1,\dots,1}_{i-3},0,*,1,0,\dots )
  \]
and correspondingly
  \[
    \Gamma=(1,1,\underbrace{0,\dots,0}_{i-3},*,*,0,*,\dots ),
  \]
    where  the asterisks  are  unspecified  digits  in  \(\Set{0,1}\). 
  We have either  \(x^\Gamma=x_1x_2\), which would imply \(n=2\), or \(i\le 6\); in the latter case we have that \(\Gamma\) is refinable.
  
  Suppose first  that \(i=6\). Again, the  unrefinability of \(\Gamma\)
  gives  \(\gamma_6=1\)  and \(\gamma_7=\gamma_8=\gamma_9=0\). Since \(\gamma_3=0\), unrefinability of \(\Gamma\) implies that \(\gamma_j=0\) for \(j\ge 10\).   Hence
  \(x^\Gamma\partial_u=   x_1x_2x_6\partial_8\)    and   \(8\le   n\le
  10\). Also \(n\ge k=\wt(\Lambda)-1\ge 3+4+5+8>10\), a contradiction.
  
  Suppose now that \(i=5\),  hence \(\lambda_5=0\) and \(\lambda_7=1\)
  and
  \(\gamma_3=\gamma_4=\gamma_7=\gamma_{10}=\gamma_{11}=\gamma_{13}=\gamma_{14}=\gamma_{15}=0\). We
  have
  \(x^\Lambda\partial_k=
  x_3x_4x_6^{\lambda_6}x_7\partial_{13+6\lambda_6}\),   in  particular
  \(n\ge 13\).   The unrefinability of \(\Gamma\) gives  \(\gamma_{j}= 0\)
  for \(j\ge 13\).  Also
  \[x^\Gamma\partial_u=x_1x_2x_5^{\gamma_5}x_6^{\gamma_6}x_8^{\gamma_8}x_9^{\gamma_9}x_{12}^{\gamma_{12}}\partial_{2+5\gamma_5+6\gamma_6+8\gamma_8+9\gamma_9+12\gamma_{12}}.\]
  If   \(x^\Gamma\partial_u\)  and   \(x^\Lambda\partial_k\)  do   not
  commute, then \(u\) has to be  \(3\), \(4\), \(6\) or \(7\). This is
  possible only if  \(x^\Gamma\partial_u=x_1x_2x_5\partial_7\) and
  \(7\le n  \le 9\), which is  incompatible with the  previously computed
  bound for \(n\).
  
  Assume now  that \(i=4\),  so that  \(\lambda_4=0\), \(\lambda_6=1\)
  and              \(\gamma_6=0\).                We              have
  \(x^\Lambda\partial_k=x_3x_5^{\lambda_5}x_6\partial_{9+5\lambda_5}\),
  yielding     \(n=9\)     or      \(n=14\).      In     this     case
  \[x^\Gamma\partial_u=
  x_1x_2x_4^{\gamma_4}x_5^{\gamma_5}x_7^{\gamma_7}x_8^{\gamma_8}x_{10}^{\gamma_{10}}x_{11}^{\gamma_{11}}
  x_{13}^{\gamma_{13}}x_{14}^{\gamma_{14}}\cdots \partial_u. \] These
  two elements   commute unless
  \(k=14\)  and \(\gamma_{14}=1\), in  which case \(n> u>  16\), a contradiction, or \(u=6\) and so \(n\le 8\), again a contradiction.

  We   are  then   left  to   consider  \(i=3\).    In  this   case
  \(x^\Lambda\partial_k=x_4x_5\partial_{8}\)  and  therefore   \(n=8\).   Correspondingly
  \(x^\Gamma\partial_u=x_1x_2x_3^{\gamma_3}x_6^{\gamma_6}\cdots
  \partial_u\).  These two elements do not commute  if  \(x^\Gamma\partial_u=x_1x_2x_3\partial_5\)         and
    \(x^\Lambda\partial_k=x_4x_5\partial_{8}\), which is the exceptional element of Eq.~\eqref{excl2}, or \(\gamma_8=1\),    which   implies    \(\gamma_3=1\)    and
    \(x^\Gamma\partial_u=x_1x_2x_3x_6^{\gamma_6}x_7^{\gamma_7}x_8\cdots
    \partial_u\)   and so   \(n\ge  u=\wt(\Gamma)-1>   13>n=8\),   a
    contradiction.
    
\medskip

\mybox{\textsc{Step~2.}}

We now proceed by showing that   if
\(\wt(\Gamma)\le u\),                                        then
\( [x^\Lambda\partial_{k}, x^\Gamma\partial_{u}]  \in \mathfrak{N}_{n-1} \)  for every base
element \(x^\Lambda\partial_{k}\) satisfying \ref{item1thm} or \ref{item2thm}. 
Let us denote \(x^\Theta\partial_v= [x^\Lambda\partial_{k}, x^\Gamma\partial_{u}]  \) and let us assume $x^\Theta\partial_v\ne 0$.

We treat first the case \(k<u\) which means \(v=u\). This implies \(\gamma_k=1\) and \(x^\Gamma= x_k\prod x_{e_i}^{\gamma_{e_i}}\), where the \(e_i\) are the excludants of \(\Lambda\) and \(u\le k+\sum{\gamma_{e_i}}\le n\).  If \(x^\Gamma=x_k\), then  \(x^\Theta\partial_v=x^\Lambda\partial_u\) and so \(\wt(\Lambda)=u+1=k+1\) or \(\wt(\Lambda)=u+1=k+2\). Since \(k<u\), we have \(u=k+1\) and \(x^\Lambda\partial_k\) satisfies  \ref{item2thm}\ref{item2ithm} or \ref{item2thm}\ref{item2iithm}; in particular \(k=n\) and \(u=n+1\), a contradiction. We may then assume that \(\gamma_{e_i}=1\) for some \(i\).
Let us suppose that \(x^\Lambda\partial_k\) satisfies Definition~\ref{def:one_step_mx}\ref{item:zero}.  Then 
\( \wt(\Gamma)=k+\sum\gamma_{e_i} \ge k+e_1 > n\), a contradiction.
If \(x^\Lambda\partial_k\) satisfies Definition~\ref{def:one_step_mx}\ref{item:one}, then \(x^\Gamma=x_{e_1}x_k\), otherwise \(u\ge \wt(\Gamma) =  k+\sum{\gamma_{e_i}} \ge k+e_2 >n  \), and in this case \(x^\Theta\partial_u\in \mathfrak{N}_{n-1}\) as \(\Theta\) is unrefinable and its minimal excludant is \(e_2\).
Let us now suppose that \(x^\Lambda\partial_k\) satisfies Definition~\ref{def:one_step_mx}\ref{item:two} or Definition~\ref{def:one_step_mx}\ref{item:three}. Since we are assuming \(0\ne  [x^\Lambda\partial_{k}, x^\Gamma\partial_{u}]  \), we have  \(x^\Gamma=x_{e_1}x_k\) or \(x^\Gamma=x_{e_2}x_k\). Otherwise, as above, we have \(u>n\) since we are assuming the second weak excludant condition for \(x^\Lambda\partial_{k}\). Also in this case a direct check of \(\wt(\Theta)\) shows that \(x^\Theta\partial_u\in \mathfrak{N}_{n-1}\). 
Finally, if \(x^\Lambda\partial_k\) satisfies \ref{item2thm}\ref{item2ithm} or \ref{item2thm}\ref{item2iithm}, then \(u\ge k+1 >n\) which is not possible.

To conclude, let us consider the case \(k>u\). In this case \(v=k\) and either  \(\Theta\) is a refinement of \(\Lambda\) or \(\wt(\Theta) < \wt(\Lambda)\). This implies that  \(  [x^\Lambda\partial_{k}, x^\Gamma\partial_{u}]  \in \mathfrak{N}_{n-1} \) if \(x^\Lambda\partial_k\) satisfies Definition~\ref{def:one_step_mx}\ref{item:three} or \ref{item2thm}\ref{item2ithm} or \ref{item2thm}\ref{item2iithm}. If otherwise \(x^\Lambda\partial_k\) satisfies Definition~\ref{def:one_step_mx}\ref{item:zero}\ref{item:one}\ref{item:two},
then either \(\Theta\) is unrefinable of weight \(k+1\) and satisfy the minimal excludant condition, or \(\wt(\Theta)\le v=k \). In both cases \(x^\Theta\partial_v\in \mathfrak{N}_{n-1}\). This concludes the proof.
\end{proof}

\section{Conclusions and open problems}\label{sec:conclusion}
Computing the chain of normalizers of Eq.~\eqref{eq:chain} is a computationally challenging task which soon clashes with the exponential growth of the order of the considered groups. In fact, as already pointed out in~\cite{Aragona2020}, computing the chain of normalizers up to the $(n-2)$-th term and more would not have been possible without introducing rigid commutators~\cite{Aragona2021}. Unfortunately, it appears that there is no \emph{natural} way to generalize the notion of rigid commutators when $p$ is odd in such a way these turn out  to be closed under commutation. An odd version of the rigid commutator machinery, as described in the cited paper for \(p=2\), would be indeed the key ingredient that could prove helpful in computing the chain of normalizers in $\Sym(p^n)$. This task is otherwise computationally unfeasible when $p\geq 3$, even when minimal values of $n$ are considered.

With this goal in mind, in this work we have introduced  a new framework which moves the setting from the symmetric group to a Lie ring with a basis of elements represented by partitions of integers which parts can be repeated no more than $m-1$ times. In this framework, the construction of the Lie ring reflects the construction of the Sylow \(p\)-subgroup of \(\Sym(p^n)\) when $m=p$ is prime, and still provides meaningful results when $m$ is composite. We defined the corresponding idealizer chain in the Lie ring and proved, as expected, that the growth of the idealizer chain goes as in the case of $\Sym(2^n)$ when $m=2$, and proceeds according to its natural generalization when $m > 2$ (cf.\ Theorem~\ref{cor:distinct_parts} and Theorem~\ref{cor:main}). In particular, when $m=2$ an explicit bijection between generators which preserves commutators is provided (cf.\ Definition~\ref{def:bije} and Theorem~\ref{thm:liechan}).

The possible obvious extensions of the notion of rigid commutators in the case \(p\) odd, to which will correspond a bijection similar to that given in Definition~\ref{def:bije}, do not produce a set of commutators that turns out to be closed under commutation, a property that is crucial in the proof of Theorem~\ref{thm:liechan}. 
If a commutation-closed extension were found, it would not be hard to believe that a natural correspondence preserving commutators between the new rigid commutators and  the basis elements of the Lie ring, as the one of Definition~\ref{def:bije}, may exist.
This would imply that Theorem~\ref{cor:main} is the $p$-analog of the chain of normalizers in $\Sym(p^n)$, where $m=p$ is odd, which at the time of writing remains a very plausible conjecture for which this paper, in the absence of any computational evidence, represents a source of support.
\bibliographystyle{amsalpha}
\bibliography{sym2n_ref}

\end{document}